\setlist[enumerate]{label*=\arabic*.}
\theoremstyle{plain}
\newtheorem{theorem}{Theorem}[section]
\newtheorem{corollary}{Corollary}[theorem]
\newtheorem{lemma}[theorem]{Lemma}
\newtheorem{proposition}[theorem]{Proposition}
\theoremstyle{definition}
\newtheorem{definition}[theorem]{Definition}
\newtheorem{notation}{Notation}
\newtheorem{example}{Example}[section]
\theoremstyle{remark}
\newtheorem*{remark}{Remark}
\newcommand{\Aut}{\mathrm{Aut}}
\newcommand{\Inn}{\mathrm{Inn}}
\newcommand{\Out}{\mathrm{Out}}
\newcommand{\Fix}{\mathrm{Fix}}
\newcommand{\Stab}{\mathrm{Stab}}
\newcommand{\Hyp}{\mathrm{Hyp}}
\newcommand{\maxc}[1]{\langle | #1 | \rangle}
\title{Fixed Points of Automorphisms of Torus Knot Groups}
\author{Oli Jones \footnote{Email: oj2003@hw.ac.uk Address: School of Mathematical and Computer Sciences, Heriot-Watt University, Edinburgh, Scotland, EH14 4AS}}
\date{}
\begin{document}
\usetikzlibrary{arrows,positioning}
\maketitle

\begin{abstract}
We completely classify fixed point subgroups in Torus Knot Groups, that is groups of the form $G_{p,q} = \langle x , y | x^p = y^q \rangle$. We not only give the isomorphism type, but also the explicit generators for the fixed point subgroup of each automorphism of $G_{p,q}$. Our main tool is an action of $\Aut(G_{p,q})$ on the Bass-Serre tree of $G_{p,q}$ which is compatible with the original action, in the sense that it extends the original action of $G_{p,q}$ on its Bass-Serre tree.
\end{abstract}

\textbf{Keywords:} Fixed-Subgroups, Amalgamated Products, Bass-Serre Theory, Automorphisms

\textbf{MSC Codes:} 20E06, 20E08, 20F28

\section{Introduction}

Given an automorphism $\varphi$ of a group $G$, its set of fixed points, $$\Fix(\varphi) = \{g \in G | \varphi(g) = g\},$$ is a subgroup of $G$. Fixed points were first prominently studied in free groups; in 1975 Dyer and Scott published the famous Scott Conjecture \cite{DS75}, that the rank of a fixed point subgroup in a finitely generated free group was bounded above by the rank of the ambient group. This conjecture was proven by Bestvina and Handel in 1992 \cite{BH92}. Fixed points have since been explored in various contexts, including hyperbolic groups \cite{N92}; groups acting on trees with finite edge stabilisers \cite{S02}; relatively hyperbolic groups \cite{MO11}; and right-angled Coxeter and Artin groups \cite{F21}.

The purpose of this paper is to fully classify fixed point subgroups in torus knot groups, that is groups with the presentation $G_{p,q} = \langle x, y | x^p = y^q \rangle$, where $p, q \in \mathbb{Z}_{\geq 2}$. See Example 1.24 of \cite{H01} for the perspective on these groups as the fundamental group of a torus knot. It is also notable that this class includes the odd dihedral Artin groups; the Artin group defined by an edge labelled $2n+1$ is isomorphic to $G_{2, 2n+1}$ \cite{CHR20}. 

Torus knot groups split as an amalgamated product $\mathbb{Z} *_{\mathbb{Z}} \mathbb{Z}$. As such they act on a tree, known as the Bass-Serre tree, with cyclic edge and vertex stabilisers. Our primary tool for understanding fixed points comes from \cite{GHMR99}, which takes an action of $G_{p,q}$ on its Bass-Serre tree, and produces a \emph{compatible} action of $\Aut(G_{p,q})$ on the same tree. Compatibility will be formally defined in Definition \ref{compatibleaction}, but essentially means the action of an inner automorphism $\iota_g: x \mapsto gxg^{-1}$ will be the same as the action of $g$. In Section \ref{compatiblesection} we explore how one can recover information about fixed points from a compatible action. 

Section \ref{compatiblesection} is written in the general setting of a group $G$ and its automorphism group acting compatibly on a set $X$. Such compatible actions have been produced for other groups, for instance mapping class groups \cite{I02}, Higman's group \cite{M17}, graph products \cite{GM18}, and certain Artin groups \cite{V23}. For any group acting minimally on a tree which is not a line, a subgroup of its automorphism group has such a compatible action \cite{A21} (in our case this subgroup is the whole automorphism group). It is likely that these actions could be exploited using a similar strategy to the current paper to understand fixed points in those groups.

In order to state the main theorem we need to describe the automorphisms of torus knot groups. Throughout the paper, we will use $\iota_g$ to denote conjugation by $g$. We will write $[\varphi]$ for the equivalence class of an automorphism $\varphi$ in the outer automorphism group $\Out(G_{p,q})$.

If $p \neq q$, then $\Out(G_{p,q}) \cong C_2$, and if $p=q$, $\Out(G_{p,q}) \cong C_2 \times C_2$ \cite{GHMR99}. In both cases there is the non-inner automorphism $\tau: G_{p,q} \rightarrow G_{p,q}, x \mapsto x^{-1}, y \mapsto y^{-1},$ which we will sometimes call the global inversion. This generates a $C_2$ factor in $\Out(G_{p,q})$ in both cases. When $p = q$, $G_{p,q}$ also has the non-inner automorphism $\sigma: G_{p,p} \rightarrow G_{p,p}, x \mapsto y, y \mapsto x$, which generates the other cyclic factor. This means that any automorphism of $G_{p,q}$ where $p \neq q$ can be written as $\iota_g \tau^r$ and any automorphism of $G_{p,p}$ can be written as $\iota_g \tau^r \sigma^s$, where $g \in G_{p,q}$ and $r, s \in \{0,1\}$. 

With this established we are ready to state the main theorem, which fully classifies the fixed point subgroups of torus knot groups up to isomorphism, further giving explicit generators in each case. In the following, $\maxc{g}$ denotes the maximal cyclic subgroup containing $g$. This is well defined for non-central elements of torus knot groups (Lemma \ref{maxc}). We take care to only use this notation when it is well defined.

\begin{theorem}\label{maintheorem1}
Let $G_{p,q} = \langle x, y | x^p = y^q \rangle$ be a torus knot group (with no restriction on $p, q \in \mathbb{Z}_{\geq 2}$). Then the fixed points of non-trivial automorphisms in the outer automorphism class $[id]$ are as follows.

\begin{enumerate}
\item If $\iota_g$ is finite order, then $\Fix(\iota_g) = \maxc{g} \cong \mathbb{Z}$ (Lemma \ref{ellipticcentralizers}).
\item If $\iota_g$ is infinite order, then we may assume $g$ acts directly (see Definition \ref{directdefn}), and $\Fix(\iota_g) = \maxc{g} \times \langle x^p \rangle \cong \mathbb{Z}^2$ (Lemma \ref{hyperboliccentralizers}).
\end{enumerate}

The fixed points of non-trivial automorphisms in $[\tau]$ are given by the following.
\begin{enumerate}
\item If $\iota_g \tau$ is finite order, then there exists $h \in G$ and $r \in \mathbb{Z}$ such that $g = hw^rh^{-1}$, where $w \in \{x,y\}$ (Lemma \ref{trueformlemma}). The structure of the fixed point subgroup of $\iota_g \tau$ may depend on the parity of $r$.

\begin{enumerate}
\item If $p$, $q$ and $r$ are even, then $\Fix(\iota_g \tau) = \langle hw^{\frac{r}{2}}x^{\frac{p}{2}}y^{-\frac{q}{2}}w^{-\frac{r}{2}}h^{-1} \rangle \cong \mathbb{Z}$, where $w \in \{x,y\}$ is as above (Lemma \ref{hyperbolicfixedpoints}).
\item If $p$, $q$ and $r$ are not all even, then $\Fix(\iota_g \tau) = \{1\}$ (Lemma \ref{ellipticfixedpoints} and Lemma \ref{hyperbolicfixedpoints}).
\end{enumerate}

\item If $\iota_g \tau$ is infinite order, then $\Fix(\iota_g) = \maxc{g\tau(g)} \cong \mathbb{Z}$ (Lemma \ref{algebraictau}).

\end{enumerate}
\end{theorem}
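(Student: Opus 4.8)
The plan is to reduce everything to the compatible action of $\Aut(G_{p,q})$ on the Bass–Serre tree and to the two normal forms already recorded: each automorphism in $[\mathrm{id}]$ is an inner $\iota_g$, and each automorphism in $[\tau]$ is $\iota_g\tau$. In every case the governing dichotomy is whether the automorphism acts elliptically on the tree (equivalently, has finite order) or hyperbolically (infinite order). For $\iota_g$ the fixed subgroup is exactly the centraliser $C_{G_{p,q}}(g)$, so the two $[\mathrm{id}]$ statements are centraliser computations; for the $[\tau]$ statements I would exploit the identity $(\iota_g\tau)^2=\iota_{g\tau(g)}$ to pass to an inner automorphism whenever possible.

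For $[\mathrm{id}]$ I would work through the splitting $G_{p,q}=\langle x\rangle *_{\langle x^p\rangle}\langle y\rangle$, whose centre is $Z=\langle x^p\rangle=\langle y^q\rangle$ and whose quotient $G_{p,q}/Z\cong C_p*C_q$ carries the tree action. Here $\iota_g$ has finite order iff the image of $g$ is torsion in $C_p*C_q$, i.e.\ $g$ is elliptic; a centraliser analysis in the amalgam then gives $C_{G_{p,q}}(g)=\maxc{g}\cong\mathbb{Z}$, because the only elements commuting with a non-central power of $x$ (or $y$) already lie in $\langle x\rangle$ (resp.\ $\langle y\rangle$). When $g$ is hyperbolic, $C_{G_{p,q}}(g)$ is the setwise stabiliser of the axis, which splits as the translation subgroup $\maxc{g}$ times the kernel $Z=\langle x^p\rangle$ of the action, giving $\maxc{g}\times\langle x^p\rangle\cong\mathbb{Z}^2$; here one first normalises $g$ to act directly so that $\maxc{g}$ is canonical.

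For $[\tau]$ with $\iota_g\tau$ of infinite order, the element $g\tau(g)$ is hyperbolic, so the $[\mathrm{id}]$ computation gives $\Fix((\iota_g\tau)^2)=\Fix(\iota_{g\tau(g)})=\maxc{g\tau(g)}\times\langle x^p\rangle\cong\mathbb{Z}^2$. As $\Fix(\iota_g\tau)\subseteq\Fix((\iota_g\tau)^2)$, it only remains to restrict $\iota_g\tau$ to this rank-two group; because $\tau$ inverts the central factor, the fixed set collapses to the rank-one subgroup $\maxc{g\tau(g)}\cong\mathbb{Z}$. For $[\tau]$ with $\iota_g\tau$ of finite order the automorphism is elliptic, so after conjugating it to stabilise a standard vertex one obtains the normal form $g=hw^rh^{-1}$ with $w\in\{x,y\}$ from Lemma \ref{trueformlemma}, and the problem becomes solving the twisted equation $g\,\tau(k)\,g^{-1}=k$ inside the corresponding vertex structure.

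I expect the finite-order $[\tau]$ case, with its parity dependence, to be the main obstacle. Reducing $g\tau(k)g^{-1}=k$ modulo $Z$ turns $\tau$ into inversion on $C_p*C_q$, whose elliptic fixed data is a nontrivial order-two element of a factor $\langle\bar x\rangle$ or $\langle\bar y\rangle$ precisely when $p$ or $q$ is even; the subtlety is that lifting such a fixed point back to $G_{p,q}$ introduces a central correction, since $\tau(x^{p/2})=x^{-p/2}=x^{p/2}(x^p)^{-1}$, and these corrections cancel only for the combination $x^{p/2}y^{-q/2}$, which additionally survives conjugation by $w^{r}$ only when $r$ is even. The crux is therefore to show that when $p,q,r$ are all even the powers of $hw^{r/2}x^{p/2}y^{-q/2}w^{-r/2}h^{-1}$ are exactly the fixed points, and that otherwise the half-integer obstruction forces $k=1$.
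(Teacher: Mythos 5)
Your treatment of the class $[\mathrm{id}]$ and of the infinite-order automorphisms in $[\tau]$ follows essentially the paper's own route (the compatible tree action, centraliser computations, and the identity $(\iota_g\tau)^2=\iota_{g\tau(g)}$), and is correct up to two repairable imprecisions. First, the centraliser of a hyperbolic $g$ is \emph{not} the full setwise stabiliser of its axis: when $p$ and $q$ are both even, for instance, $x^{p/2}$ conjugates $x^{p/2}y^{-q/2}$ to its inverse, so it preserves that axis setwise without commuting with it; one must pass to the orientation-preserving subgroup $\Hyp(T^g)$, as in Lemmas \ref{axissubgroup} and \ref{hyperboliccentralizers}. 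Second, $\Fix((\iota_g\tau)^2)$ equals $\langle\hat{g}\rangle\times\langle x^p\rangle$ where $\hat{g}$ acts \emph{directly} along the axis of $g\tau(g)$; since $g\tau(g)$ need not act directly, this can strictly contain $\maxc{g\tau(g)}\times\langle x^p\rangle$ (the proof of Lemma \ref{algebraictau} flags exactly this). Neither issue damages your conclusions, provided you also record the one-line computation that $g\tau(g)$ (and hence each of its roots) is genuinely fixed by $\iota_g\tau$, which is what identifies the rank-one fixed subgroup as $\maxc{g\tau(g)}$ rather than merely bounding its rank.

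The genuine gap is in the finite-order $[\tau]$ case, which is where all the parity conditions of the theorem live. Your proposed route --- project $g\tau(k)g^{-1}=k$ to $C_p*C_q=G_{p,q}/\langle x^p\rangle$, where $\tau$ becomes inversion of the generators, then decide which fixed points lift --- is genuinely different from the paper's (the paper computes the fixed subtree $T^{\tau_g}$ edge-by-edge in Propositions \ref{xtaustabset} and \ref{stabset} and then applies Corollary \ref{fixaction} to get an upper bound), and it could be completed; but as written it omits the step that does all the work, namely any mechanism for bounding $\Fix(\iota_g\tau)$ from above. Describing the fixed data downstairs as ``a nontrivial order-two element of a factor'' is both insufficient and not quite accurate: when $p$ and $q$ are even, the fixed subgroup of inversion on $C_p*C_q$ is the infinite dihedral group $\langle\bar{x}^{p/2}\rangle*\langle\bar{y}^{q/2}\rangle$, which contains hyperbolic elements, and it is precisely those hyperbolic elements (not the involutions themselves) whose lifts can be fixed upstairs. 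To turn your plan into a proof you would need (i) a determination of the full fixed subgroup of the twisted inversion $\iota_{\bar{w}^r}\bar\tau$ on $C_p*C_q$ for every $r$ --- e.g.\ by a reduced-word argument: inversion sends a reduced word $a_1\cdots a_n$ to $a_1^{-1}\cdots a_n^{-1}$, so a word is fixed if and only if every letter is an involution, with additional care in the conjugated case $r$ odd --- and (ii) the verification that every element of $\Fix(\iota_g\tau)$ projects into that subgroup and that the central correction vanishes exactly on the claimed cyclic subgroup. Your closing sentence (``the crux is therefore to show\dots'') defers precisely these points, so the proposal establishes only the lower bound --- that the displayed generator is fixed --- and not that nothing else is; in particular it does not yet yield $\Fix(\iota_g\tau)=\{1\}$ in the cases where $p$, $q$, $r$ are not all even.
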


In the case $p = q$ there are further automorphisms not present in Theorem \ref{maintheorem1}, coming from the extra non-inner automorphism $\sigma$. The automorphisms in the classes $[id]$ and $[\tau]$ are covered by Theorem \ref{maintheorem1}.

\begin{theorem}\label{maintheorem2}
Let $G_{p,p} = \langle x, y | x^p = y^p \rangle$. The fixed points of non-trivial automorphisms in the outer automorphism class $[\sigma]$ are as follows.

\begin{enumerate}
\item If $\iota_g \sigma$ is finite order then $\Fix(\iota_g\sigma) = \langle x^p \rangle \cong \mathbb{Z}$ (Lemma \ref{geometricsigma}).
\item If $\iota_g \sigma$ is infinite order then $\Fix(\iota_g \sigma) = C(g\sigma(g)) \cong \mathbb{Z}^2$ (Lemma \ref{algebraicsigma}). The generators are $x^p$ and $\hat{g\sigma(g)}$, where $\hat{g\sigma(g)}$ acts hyperbolically along the same axis as $g\sigma(g)$ with minimal translation length (Lemma \ref{hyperboliccentralizers}).

\end{enumerate}

The fixed points of non-trivial automorphisms in the class $[\sigma\tau]$ are given by the following.

\begin{enumerate}

\item If $\iota_g \sigma \tau$ is finite order then $\Fix(\iota_g\sigma) = \{1\}$ (Lemma \ref{geometricsigma}).

\item If $\iota_g \sigma \tau$ is infinite order then $\Fix(\iota_g \sigma \tau) = \maxc{g(\sigma\tau(g))} \cong \mathbb{Z}$ (Lemma \ref{algebraicsigma}). 
\end{enumerate}

\end{theorem}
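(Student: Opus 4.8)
The plan is to exploit the two features that distinguish $\sigma$ and $\sigma\tau$ from $id$ and $\tau$: first, that each is an involution in $\Aut(G_{p,p})$, so the square of $\iota_g\sigma$ (resp. $\iota_g\sigma\tau$) is inner; and second, that both interchange the two vertex orbits of the Bass--Serre tree $T$, since $\sigma(\langle x\rangle)=\langle y\rangle$ and $\sigma\tau(\langle x\rangle)=\langle y\rangle$. Throughout write $\psi=\iota_g\varphi_0$ with $\varphi_0\in\{\sigma,\sigma\tau\}$, put $k=g\varphi_0(g)$, and record the identities $\psi^2=\iota_{g\varphi_0(g)}=\iota_k$ and $\psi(k)=k$, both of which follow from $\varphi_0^2=id$ by a direct computation. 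The finite-order/infinite-order dichotomy of $\psi$ matches the elliptic/hyperbolic dichotomy of its action on $T$: a finite-order automorphism maps to a finite-order tree automorphism, which is elliptic, while $\psi$ of infinite order forces $\psi^2=\iota_k$ of infinite order, i.e. $k$ hyperbolic.

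For the finite-order (geometric) cases I would argue on the tree. Since $\psi$ swaps the two vertex orbits it fixes no vertex, so an elliptic $\psi$ must invert a single edge $e$, and $\Fix_T(\psi)$ is exactly the midpoint of $e$. If $a\in\Fix(\psi)$ then $\psi$ commutes with $\iota_a$, so by compatibility (Definition \ref{compatibleaction}) the tree action of $a$ commutes with that of $\psi$ and hence preserves $\Fix_T(\psi)$; thus $a$ fixes the midpoint of $e$, and since $G_{p,p}$ acts without inversions, $a$ lies in the edge stabiliser $\langle x^p\rangle$. Hence $\Fix(\psi)\subseteq\langle x^p\rangle$ in both classes. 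The classes are then separated by the action on the centre: $\sigma(x^p)=y^p=x^p$ gives $\psi(x^{pn})=x^{pn}$ and so $\Fix(\iota_g\sigma)=\langle x^p\rangle\cong\mathbb{Z}$, whereas $\sigma\tau(x^p)=x^{-p}$ gives $\psi(x^{pn})=x^{-pn}$, which is fixed only when $n=0$, so $\Fix(\iota_g\sigma\tau)=\{1\}$.

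For the infinite-order (algebraic) cases I would begin from $\Fix(\psi)\subseteq\Fix(\psi^2)=\Fix(\iota_k)=C(k)$, which by Theorem \ref{maintheorem1}(2) (equivalently Lemma \ref{hyperboliccentralizers}) is free abelian of rank two with basis $\{\hat k, x^p\}$, where $\hat k$ is the minimal hyperbolic along the axis of $k$. Because $\psi$ commutes with $\psi^2$ it preserves $C(k)$, so it acts as an element of $\mathrm{GL}_2(\mathbb{Z})$ fixing $k=\hat k^{\,j}x^{pm}$ with $j\neq 0$. In the $[\sigma]$ case $\psi$ also fixes $x^p$; fixing the two $\mathbb{Q}$-linearly independent elements $k$ and $x^p$ forces $\psi|_{C(k)}=id$, whence $\Fix(\iota_g\sigma)=C(g\sigma(g))\cong\mathbb{Z}^2$ with the stated generators. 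In the $[\sigma\tau]$ case $\psi(x^p)=x^{-p}$, so $\psi$ fixes no nonzero power of $x^p$ and $\Fix(\psi)$ has rank at most one; solving $\psi(v)=v$ in the basis $\{\hat k, x^p\}$ (the relation $\psi(k)=k$ yields $a=1$ and $jb=2m$) shows the fixed line is spanned by the primitive root $\tfrac{1}{\gcd(j,m)}\,k$ of $k$, so $\Fix(\iota_g\sigma\tau)=\maxc{g\sigma\tau(g)}\cong\mathbb{Z}$.

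The main obstacle is the finite-order case: one must establish that the compatible action sends $\sigma$ and $\sigma\tau$ to tree automorphisms genuinely interchanging the vertex orbits, so that ellipticity forces an edge inversion rather than a fixed vertex, and that the midpoint stabiliser is exactly the edge group $\langle x^p\rangle$. Once this geometry is pinned down, and once Theorem \ref{maintheorem1}(2) supplies the rank-two structure of $C(k)$, the infinite-order cases reduce to the short $\mathrm{GL}_2(\mathbb{Z})$ computations above.
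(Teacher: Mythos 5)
Your proof is correct and takes essentially the same approach as the paper: your finite-order argument is exactly the paper's Lemma \ref{finiteordergsigmag} combined with Lemma \ref{geometricsigma} (finite order forces an elliptic action that swaps the two vertex orbits, hence inverts an edge, and the midpoint-stabiliser argument traps the fixed subgroup inside $\langle x^p \rangle$, after which $\sigma(x^p)=x^p$ versus $\sigma\tau(x^p)=x^{-p}$ separates the two classes), while your infinite-order argument mirrors Lemma \ref{algebraicsigma}, working inside $\Fix(\psi^2) = C(g\varphi_0(g)) \cong \mathbb{Z}^2$ and distinguishing the classes by whether the centre is fixed or inverted. The only superficial difference is in the $[\sigma\tau]$ infinite-order case, where you identify the fixed line by an explicit $\mathrm{GL}_2(\mathbb{Z})$ computation (which does check out, including that the fixed lattice is the primitive-root line through $g\sigma\tau(g)$, i.e. $\maxc{g\sigma\tau(g)}$), whereas the paper first establishes $\maxc{g\sigma\tau(g)} \leq \Fix(\iota_g\sigma\tau)$ via uniqueness of roots and then concludes by maximality of the cyclic subgroup; both routes are sound, and your terse step ``$\iota_k$ of infinite order, i.e. $k$ hyperbolic'' is justified exactly as in the paper's Lemma \ref{algebraictau}, since non-central elliptic elements have central powers.
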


The paper is organised as follows. In Section 2 we discuss isometries of trees and the Bass-Serre theory of amalgamated products, and introduce the action of $\Aut(G_{p,q})$ on the Bass-Serre tree of $G_{p,q}$. In Section 3 we define a compatible action, and explore the relationship between compatible actions and fixed points in the general setting.

We begin computing fixed points in Section 4, by finding the fixed points of inner automorphisms. Section 5 finds the fixed points of all automorphisms of $G_{p,q}$ in the outer automorphism class $[\tau]$. When $p \neq q$ this completes our understanding. The infinite order automorphisms are dealt with relatively quickly in Lemma \ref{algebraictau} by an algebraic argument. For the finite order automorphisms, we make use of the compatible action and results from Section 3 to find the fixed points geometrically. In Section 6 we deal with the case $p=q$, where there is an extra generator in the outer automorphism group. The fixed points of the extra automorphisms, in the classes $[\sigma]$ and $[\sigma\tau]$, are handled in much the same way as Section 5.

\section*{Acknowledgements}

I am grateful to my supervisor Laura Ciobanu for her feedback and support. I thank Alan Logan for helpful discussions around compatible actions, in particular noticing the result of Lemma \ref{compatibilitylemma}. Finally, thank you to Nicolas Vaskou for his comments on the manuscript.

\section{Preliminaries}

\subsection{Isometries of Trees}\label{isosoftrees}

A (simplicial) tree is a connected, contractible graph. Our trees will have finite valence at every vertex. We equip each tree $T$ with a metric $d_T$, such that the distance between two vertices is the number of edges between them, and each edge is isometric to the unit interval.

Let $\gamma$ be a \emph{simplicial isometry} of a tree $T$, that is an isometry that sends vertices to vertices and edges to edges. From here we will write isometry to mean simplicial isometry. For $\gamma$ an isometry of a tree $T$ we define $$l(\gamma) := \inf_{x \in T} d_T(x, \gamma x),$$ and $$T^\gamma := \{x \in T | d(x, \gamma x) = l(\gamma)\}.$$

If $l(\gamma) = 0$, $\gamma$ is called \emph{elliptic}. Otherwise $\gamma$ is called \emph{hyperbolic}. 

\begin{proposition}\textbf{\cite[Proposition 24]{SS12}}\label{geometricfixsubtree}
The set $T^\gamma$ is a non-empty connected subtree. If $l(\gamma) > 0$, that is $\gamma$ has no fixed points, then $T^\gamma$ is a bi-infinite line, and $\gamma$ acts as translation of amplitude $l(\gamma)$ along this axis.
\end{proposition}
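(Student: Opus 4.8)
The plan is to treat the elliptic and hyperbolic cases separately, after first disposing of non-emptiness. For non-emptiness, note that the displacement function $f(x) = d_T(x, \gamma x)$ is convex along geodesics (a tree is $\mathrm{CAT}(0)$ and $\gamma$ is an isometry), so on each edge its minimum occurs at a vertex; since $f$ takes non-negative integer values on vertices and $\mathbb{Z}_{\geq 0}$ is well-ordered, the infimum $l(\gamma)$ is attained, at a vertex $x_0$ say. Thus $T^\gamma \neq \emptyset$. If $l(\gamma) = 0$ (the elliptic case), then $T^\gamma = \Fix(\gamma)$, and I would argue this is a connected subtree in the standard way: for fixed points $a, b$, the geodesic $[a,b]$ is unique, $\gamma$ maps it to the geodesic $[\gamma a, \gamma b] = [a,b]$, and since $\gamma$ preserves distance from the fixed endpoint $a$ it fixes $[a,b]$ pointwise. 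This settles the first sentence of the statement.

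The substance is the hyperbolic case $l(\gamma) > 0$. First I would build the candidate axis: starting from a minimizing vertex $x_0$, set $L = \bigcup_{n \in \mathbb{Z}} \gamma^n[x_0, \gamma x_0]$, the union of the translates of the geodesic segment $[x_0, \gamma x_0]$ of length $l(\gamma)$. If this concatenation has no backtracking at the junction points $\gamma^n x_0$, then $L$ is a bi-infinite geodesic line along which $\gamma$ acts as translation by exactly $l(\gamma)$, by applying $\gamma^n$ to a single junction analysis. So everything reduces to the no-backtracking claim at $\gamma x_0$.

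I expect this no-backtracking step to be the main obstacle, and I would handle it by a median computation, which is the heart of the proof. Let $\mu$ be the median of the three points $x_0, \gamma x_0, \gamma^2 x_0$; the backtracking amount is $b = d_T(\gamma x_0, \mu)$, and by applying $\gamma^{-1}$ one checks $\mu$ is equidistant, at distance $b$ from $\gamma x_0$ along both $[x_0,\gamma x_0]$ and $[\gamma x_0, \gamma^2 x_0]$. Now let $m$ be the midpoint of $[x_0, \gamma x_0]$, so $\gamma m$ is the midpoint of $[\gamma x_0, \gamma^2 x_0]$. Tracing the geodesic $[m, \gamma m]$ through $\mu$ gives $d_T(m, \gamma m) = l(\gamma) - 2b$ when $b \leq l(\gamma)/2$, and an even smaller value otherwise. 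If $b > 0$ this displacement is strictly less than $l(\gamma)$, contradicting minimality; hence $b = 0$ and no backtracking occurs.

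Finally I would show $T^\gamma = L$ to conclude it is exactly a line. Points of $L$ have displacement $l(\gamma)$ by construction. Conversely, for any $y$, let $z$ be its nearest-point projection onto $L$; then $\gamma z$ is the projection of $\gamma y$, and in a tree the concatenation $[y,z] \cup [z, \gamma z] \cup [\gamma z, \gamma y]$ is geodesic, yielding $d_T(y, \gamma y) = 2\, d_T(y, L) + l(\gamma)$. This equals $l(\gamma)$ precisely when $d_T(y,L) = 0$, i.e. $y \in L$, so $T^\gamma = L$ is a bi-infinite line on which $\gamma$ translates by amplitude $l(\gamma)$, as claimed.
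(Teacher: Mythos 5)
The paper offers no proof of this proposition at all: it is quoted verbatim from \cite{SS12} (Proposition 24), so your attempt can only be judged on its own merits. The core of your argument is sound and is essentially the standard Culler--Morgan/Serre axis construction: the median computation showing zero backtracking at a displacement-minimizing point, the concatenation $L = \bigcup_n \gamma^n[x_0,\gamma x_0]$, and the projection formula $d_T(y,\gamma y) = 2\,d_T(y,L) + l(\gamma)$ forcing $T^\gamma = L$ are all correct, as is the elliptic-case connectedness argument.

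The genuine gap is the very first step, attainment of the infimum, and it is not a cosmetic one. From convexity of the displacement function you infer that ``on each edge its minimum occurs at a vertex''; that is a non sequitur (a convex function on an interval can perfectly well have an interior minimum), and the conclusion is actually false in exactly the situation this paper needs. If $\gamma$ inverts an edge, the displacement along that edge is $|1-2t|$, minimized only at the midpoint, and no vertex realizes $l(\gamma)=0$; such isometries genuinely occur here, since the $\Aut(G_{p,q})$-action has inversions ($\sigma$ inverts $e_{\langle x^p \rangle}$), and Lemma \ref{geometricsigma} depends on $T^{\varphi_{g,r}}$ being a single edge midpoint --- a point your argument cannot produce. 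The deeper issue is that attainment is the real content of non-emptiness: in a general CAT(0) space the displacement function is convex yet parabolic isometries (infimum not attained) exist, so no argument from convexity alone can work, and some genuine use of simpliciality is required. Two standard repairs: (i) pass to the barycentric subdivision, on which $\gamma$ acts without inversions, and show via the bridge between an edge $e$ and its image $\gamma e$ (every geodesic from $e$ to $\gamma e$ passes through one fixed pair of endpoints) that the displacement restricted to a closed edge is affine, hence minimized at an endpoint --- only then does your well-ordering argument apply; or (ii) avoid assuming attainment entirely by running your median computation from an \emph{arbitrary} vertex $v$: with $\Delta = d_T(v,\gamma v)$ and backtrack $b$, either $2b \geq \Delta$ and the midpoint of $[v,\gamma v]$ is fixed, or $2b < \Delta$ and the median itself lies on a $\gamma$-invariant line along which $\gamma$ translates by $\Delta - 2b$; your projection formula then shows $\Delta - 2b = l(\gamma)$, so attainment comes out as a conclusion rather than a hypothesis. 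With either repair the remainder of your proof goes through unchanged.
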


Proposition \ref{geometricfixsubtree} implies that any elliptic isometry $\gamma$ has fixed points, namely the set $T^\gamma \neq \phi$. If $n \in \mathbb{Z}_{>0}$, notice that $l(\gamma^n) = n ~ l(\gamma)$. In particular, proper powers of elliptic elements are elliptic, and proper powers of hyperbolic elements are hyperbolic.

Notice also that $l(\gamma) \in \mathbb{Z}_{\geq 0}$ for all isometries $\gamma$, because the isometries are assumed to be simplicial.

\subsection{Bass-Serre Theory}

The groups $G_{p,q} = \langle x, y | x^p = y^q \rangle$ split as amalgamated products $\langle x \rangle \ast_{x^p = y^q} \langle y \rangle$. As such, there is a corresponding graph $T$, known as the Bass-Serre tree, given by the following definition; this graph has properties as in Lemma \ref{bassserretheory}. In particular it is a tree, justifying the name Bass-Serre tree. Note that although the tree depends on the parameters $p$ and $q$, we will suppress this in the notation.

\begin{definition}
The Bass-Serre tree, $T$, for the torus knot group $G_{p,q}$, is the graph with vertices $v_{g\langle x \rangle}$ or $v_{g\langle y \rangle}$ and edges $e_{g\langle x^p \rangle}$, where $g \in G$, identifying simplices when the corresponding cosets are equal. The endpoints of $e_{g\langle x^p \rangle}$ are defined as $v_{g\langle x \rangle}$ and $v_{g\langle y \rangle}$.

\end{definition}

The stabiliser of a simplex $s$ will be denoted by $G_s$.

\begin{lemma}\textbf{\cite[Theorem 7]{SS12}}\label{bassserretheory}
The graph $T$ is a tree. $G_{p,q}$ acts on $T$ on the left without inversions by $hv_{g\langle x \rangle} := v_{hg\langle x \rangle}$, and likewise for edges. A fundamental domain is given by $v_{\langle x \rangle}$, $v_{\langle y \rangle}$ and $e_{\langle x^p \rangle}$.

For each $g \in G_{p,q}$, $G_{v_{g\langle x \rangle}} = g\langle x \rangle g^{-1}$, $G_{v_{g\langle y \rangle}} = g\langle y \rangle g^{-1}$ and $G_{e_{g\langle x^p \rangle}} = \langle x^p \rangle$.
\end{lemma}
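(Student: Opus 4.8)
The plan is to recognise $T$ as the standard Bass--Serre tree of the amalgamated product decomposition $G_{p,q} = \langle x\rangle \ast_{\langle x^p\rangle = \langle y^q\rangle} \langle y\rangle$, and then to verify each assertion by the usual coset-space arguments. Write $A = \langle x\rangle$, $B = \langle y\rangle$ and $C = \langle x^p\rangle = \langle y^q\rangle$, so that the vertices of $T$ are the cosets $gA$ and $gB$ and the edges are the cosets $gC$, with $gC$ joining $gA$ to $gB$.

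First I would establish that $T$ is a tree. Connectedness is straightforward: since $G_{p,q}$ is generated by $A \cup B$, any vertex $gA$ can be joined to the base vertex $A$ by the edge-path obtained from a word for $g$ in the generators, stepping alternately through $A$- and $B$-cosets. The harder direction, and the main obstacle, is acyclicity, for which I would appeal to the normal form theorem for amalgamated products: any product $c_1 c_2 \cdots c_n$ with $n \geq 1$ whose factors lie alternately in $A \setminus C$ and $B \setminus C$ represents a nontrivial element of $G_{p,q}$. A reduced closed edge-path in $T$ based at the vertex $A$ would produce exactly such a product equal to the identity, a contradiction; hence $T$ has no cycles and is therefore a tree.

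Next I would check the action. The left-multiplication rule $h \cdot gA := hgA$ is well defined on cosets and sends edges to edges, since $h \cdot gC = hgC$ joins $hgA$ to $hgB$; thus the action is by simplicial automorphisms. It is without inversions because the action preserves the vertex type, sending $A$-cosets to $A$-cosets and $B$-cosets to $B$-cosets, so no element can interchange the two endpoints of an edge. For the fundamental domain, observe that $G_{p,q}$ acts transitively on the $A$-type vertices (as $g \cdot A = gA$), transitively on the $B$-type vertices, and transitively on the edges; hence $\{v_{\langle x\rangle}, v_{\langle y\rangle}, e_{\langle x^p\rangle}\}$ contains one representative of each orbit and forms a fundamental domain.

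Finally I would compute the stabilisers. For a vertex, $h \in \Stab(gA)$ iff $hgA = gA$ iff $g^{-1}hg \in A$, giving $G_{v_{g\langle x\rangle}} = gAg^{-1} = g\langle x\rangle g^{-1}$, and the identical computation yields $G_{v_{g\langle y\rangle}} = g\langle y\rangle g^{-1}$ and $G_{e_{g\langle x^p\rangle}} = gCg^{-1}$. The one extra observation needed for the stated form of the edge stabiliser is that $x^p = y^q$ is central in $G_{p,q}$: it commutes with $x$ and with $y$, hence with all of $G_{p,q}$, so $C = \langle x^p\rangle$ is a central subgroup and $gCg^{-1} = C = \langle x^p\rangle$ for every $g$. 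This gives the uniform edge stabiliser $\langle x^p\rangle$ claimed in the statement.
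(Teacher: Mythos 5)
Your proof is correct, but note that the paper does not prove this lemma at all: it is imported wholesale from \cite[Theorem 7]{SS12}, the standard existence theorem for Bass--Serre trees of amalgams, so your argument cannot be compared with an in-paper proof; it should instead be compared with the cited source, and what you have written is essentially the classical argument given there. Connectedness comes from the fact that $\langle x \rangle \cup \langle y \rangle$ generates, acyclicity from the normal form theorem for amalgamated products, and the action, fundamental domain, and stabilisers are routine coset computations. Two points are worth highlighting. First, the real content sits exactly where you located it: a reduced closed edge-path based at $v_{\langle x \rangle}$ yields an alternating product $a_0 b_1 a_1 \cdots b_n$ lying in $\langle x \rangle$ with all intermediate letters outside $\langle x^p \rangle$, which after absorbing the outer $A$-letters contradicts the normal form theorem; so your proof is only as self-contained as your proof of that theorem, which is a result of comparable depth (Serre in fact proves the two are essentially equivalent), and this is precisely why the paper prefers to cite rather than reprove. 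Second, your observation that the uniform edge stabiliser $\langle x^p \rangle$, rather than the generic Bass--Serre answer $g\langle x^p \rangle g^{-1}$, requires the centrality of $x^p = y^q$ is a genuine extra step beyond the general theory; the paper records this fact separately as Lemma \ref{centre}, whose proof of the inclusion $\langle x^p \rangle \leq Z(G_{p,q})$ is the same two-line commutation argument you give, so your proposal correctly identifies and discharges the one place where the statement of the lemma goes beyond the off-the-shelf theorem.
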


The tree $T$ is known as the Bass-Serre tree of $G_{p,q}$ corresponding to the splitting as an amalgam. That the action is without inversions means there is no edge which has its midpoint fixed but its endpoints exchanged.

From here we will usually denote $v_{g\langle x \rangle}$ by $gv_{\langle x \rangle}$.

We write $Z(G_{p,q})$ for the centre of $G_{p,q}$.

\begin{lemma}\label{centre}
The centre of $G_{p,q}$ is infinite cyclic and generated by $x^p = y^q$. In particular, the centre is the stabiliser of each edge in $T$.
\end{lemma}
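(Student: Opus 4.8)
The plan is to prove Lemma \ref{centre} in two pieces: first identify the centre algebraically, then deduce its relationship to the edge stabilisers using the Bass-Serre data already assembled in Lemma \ref{bassserretheory}. For the first piece, I would start from the amalgamated product structure $G_{p,q} = \langle x \rangle \ast_{x^p = y^q} \langle y \rangle$. The element $z := x^p = y^q$ commutes with $x$ (it is a power of $x$) and with $y$ (it is a power of $y$), hence with all of $G_{p,q}$ since $x, y$ generate; so $\langle z \rangle \subseteq Z(G_{p,q})$. Because $p, q \geq 2$, the factors $\langle x \rangle \cong \mathbb{Z}$ and $\langle y \rangle \cong \mathbb{Z}$ are both strictly larger than the amalgamated subgroup $\langle z \rangle$, so $G_{p,q}$ is a nontrivial amalgam and $z$ has infinite order (the canonical map from each factor into the amalgam is injective). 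Thus $\langle z \rangle \cong \mathbb{Z}$.

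For the reverse inclusion $Z(G_{p,q}) \subseteq \langle z \rangle$ I would argue via the action on $T$. The centre is a normal subgroup, so every central element $c$ commutes with $x$ and $y$, meaning $c$ stabilises the same vertices these elements do relative to their axes; more directly, a central element fixes the whole tree setwise in a way compatible with every group element, so I would show a central element must be elliptic and in fact fix every vertex. Concretely: suppose $c \in Z(G_{p,q})$. Then for every $g$, $c$ fixes the vertex $gv_{\langle x\rangle}$ if and only if $g^{-1}cg$ lies in $\langle x \rangle$, but $g^{-1}cg = c$ by centrality, so $c$ fixes $gv_{\langle x\rangle}$ precisely when $c \in g\langle x\rangle g^{-1} = G_{gv_{\langle x\rangle}}$. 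Since this must hold simultaneously as a necessary condition for $c$ commuting with all conjugates, I would instead phrase it as: $c$ lies in the intersection of all vertex stabilisers of one colour, forcing $c \in \bigcap_g g\langle x \rangle g^{-1}$, which is the normal core of $\langle x \rangle$ and equals $\langle x^p \rangle = \langle z \rangle$ here (since the edge stabiliser $\langle x^p\rangle$ is exactly what survives in the core of an amalgamated factor). This pins down $Z(G_{p,q}) = \langle z \rangle$.

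For the second assertion, I would simply invoke Lemma \ref{bassserretheory}, which states $G_{e_{g\langle x^p\rangle}} = \langle x^p \rangle$ for every edge. Since $\langle x^p \rangle = \langle z \rangle = Z(G_{p,q})$ by the first part, the centre is exactly the stabiliser of each edge, and this is uniform across all edges. This step is essentially bookkeeping once the algebraic identification is done.

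The main obstacle is the reverse inclusion $Z(G_{p,q}) \subseteq \langle z \rangle$: making rigorous the claim that a central element must sit inside the edge stabiliser. The cleanest route avoids the tree and uses normal-form theory for amalgamated products directly, namely that in a nontrivial amalgam $A \ast_C B$ the centre is contained in $C$ whenever $A, B$ properly contain $C$. I expect to either cite this standard normal-form fact or spell out the short argument that any element not conjugate into $C$ acts hyperbolically (by Proposition \ref{geometricfixsubtree} it has no global fixed point), whereas a central element commutes with hyperbolic elements of differing axes and so cannot itself be hyperbolic, forcing it to be elliptic with fixed set all of $T$, hence into every edge stabiliser $\langle x^p \rangle$.
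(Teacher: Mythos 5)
Your opening inclusion $\langle x^p \rangle \subseteq Z(G_{p,q})$ and the closing bookkeeping step (edge stabilisers equal $\langle x^p \rangle$ by Lemma \ref{bassserretheory}) are fine, and your fallback option of simply citing the standard fact that the centre of a nontrivial amalgam $A \ast_C B$ lies in $C$ would give a correct, if outsourced, proof. For comparison, the paper's own argument for the reverse inclusion is shorter and avoids the tree entirely: it applies the quotient map $\phi: G_{p,q} \rightarrow \langle x \mid x^p \rangle \ast \langle y \mid y^q \rangle$ killing $\langle x^p \rangle$, notes that a surjection carries centre into centre, and uses that a nontrivial free product has trivial centre, so $Z(G_{p,q}) \leq \ker(\phi) = \langle x^p \rangle$. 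This handles all $p, q \geq 2$ uniformly.

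The tree argument you actually spell out has genuine gaps. First, your middle paragraph is circular: centrality gives the equivalence ``$c$ fixes $gv_{\langle x \rangle}$ iff $c \in g\langle x \rangle g^{-1}$ iff $c \in \langle x \rangle$'', i.e.\ the fixed vertex set of $c$ is a union of whole colour classes, but it does not show this set is non-empty. You must first prove $c$ is elliptic; without that, nothing forces $c$ into $\bigcap_g g\langle x \rangle g^{-1}$. Second, the dichotomy you invoke, ``any element not conjugate into $C$ acts hyperbolically'', is false: $x$ lies in no conjugate of $C = \langle x^p \rangle$ (conjugates of $C$ equal $C$ since $C$ is central, and $x \notin \langle x^p \rangle$ for $p \geq 2$), yet $x$ fixes $v_{\langle x \rangle}$. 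The correct dichotomy is that elements hyperbolic are exactly those not conjugate into a vertex group $\langle x \rangle$ or $\langle y \rangle$; with that correction, ellipticity of $c$ only places $c$ in some vertex stabiliser, and you still need the invariance-plus-edge-transitivity argument you only gesture at (``fixed set all of $T$'') to push $c$ into an edge stabiliser. Third, and most concretely, your mechanism for ruling out a hyperbolic central element --- ``a central element commutes with hyperbolic elements of differing axes'' --- fails when $p = q = 2$, a case the paper explicitly includes: there every vertex of $T$ has valence $[\langle x \rangle : \langle x^2 \rangle] = 2$, so $T$ is itself a bi-infinite line, every hyperbolic element has the same axis, and no two differing axes exist. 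The statement remains true there ($G_{2,2}$ is the Klein bottle group with centre $\langle x^2 \rangle$), but proving it requires a different argument, e.g.\ that $x$ acts on the line as a reflection so conjugation by $x$ reverses the translation direction of any hyperbolic element --- or the paper's quotient argument, which needs no case distinction since $Z(\mathbb{Z}_2 \ast \mathbb{Z}_2)$ is trivial.
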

\begin{proof}
Firstly, $x^p = y^q$ commutes with $x$ and $y$, so $\langle x^p \rangle \leq Z(G_{p,q})$.

For the reverse inclusion, consider $\phi: G_{p,q} \rightarrow \langle x | x^P \rangle * \langle y | y^q \rangle$, the quotient by $\langle x^p \rangle$. If $g \in Z(G_{p,q})$ then, since $\phi$ is surjective, $\phi(g) \in Z(\langle x | x^p \rangle * \langle y | y^q \rangle) = 1$. So $Z(G_{p,q}) \leq Ker(\phi) = \langle x^p \rangle$. So $Z(G_{p,q}) = \langle x^p \rangle$.

Since $G_{e_{g\langle x^p \rangle}} = \langle x^p \rangle$ for each edge $e_{g\langle x^p \rangle}$, the statement is proved.
\end{proof}

For a bi-infinite line $\omega$ in the tree, recall $\Stab(\omega)$ is the setwise stabiliser. We now study a subgroup which should be thought of as elements that act along the line, preserving its orientation. We write $\Hyp(\omega)$ for the subgroup of $\Stab(\omega)$ comprised of elements which fix a point in $\omega$ if and only if they (pointwise) fix $\omega$. That is, we exclude elliptic elements of $\Stab(\omega)$ acting non-trivially on $\omega$ from $\Hyp(\omega)$. Notice that any element of $G_{p,q}$ acting trivially on $\omega$ acts trivially on the whole tree, as $\omega$ contains an edge and every edge in the tree has the same stabiliser. Another way of looking at this subgroup is as the elements that act on $\omega$ by translations.

\begin{lemma}\label{axissubgroup}
For any hyperbolic $h \in G_{p,q}$, and its axis of translation $T^h$, $\Hyp(T^h) = \langle \hat{g} \rangle \times \langle x^p \rangle \cong \mathbb{Z}^2$, where $\hat{g} \in \Stab(T^h)$ is an element acting hyperbolically, and $|l(\hat{g})|$ is minimal among such elements.
\end{lemma}
\begin{proof}
Since translation lengths are non-negative integers, the minimum translation length of an element acting hyperbolically along $T^h$ is well defined and achieved (because any set of non-negative integers has a well defined and realised minimum). So such a $\hat{g}$ exists. Notice it is not necessarily unique.

Take $g \in \Stab(T^h)$. Since $|l(\hat{g})| \leq |l(g)|$, there are integers $m$ and $r$ with $0 \leq r < |l(\hat{g})|$ such that $l(g) = m l(\hat{g}) + r$. Now notice $g\hat{g}^{-m}$ acts along $T^h$ with translation length $r$, so it must be that $r = 0$ by minimality of $|l(\hat{g})|$. So $g\hat{g}^{-m}$ fixes $T^g$ pointwise, in particular fixes an edge, and so $g\hat{g}^{-m} \in \langle x^p \rangle$. As required, $g \in \langle \hat{g} \rangle \times \langle x^p \rangle$.
\end{proof}

This motivates the following definition, which captures the elements $h$ acting hyperbolically in the $\langle \hat{g} \rangle$ factor of $\Hyp(T^h)$.

\begin{definition}\label{directdefn}
We will say $g \in G_{p,q}$ acts \emph{directly} along axis $\omega$ if it acts hyperbolically along $\omega$ and there is $\hat{g}$ acting along $\omega$ with minimal non-zero translation length such that $g = \hat{g}^k$ where $0 \neq k \in \mathbb{Z}$.
\end{definition}

\begin{remark} If $g$ acts hyperbolically along $\omega$, then, for some $k \in \mathbb{Z}$, $g(x^p)^k$ acts directly along $\omega$, since elements in the $\hat{g}$ factor in Lemma \ref{axissubgroup} act directly. In particular, the automorphism $\iota_g$ can be represented by $\iota_{g(x^p)^k}$, because conjugation by central elements is trivial. That is, we can always assume that conjugation by an arbitrary element acting hyperbolically is conjugation by an element acting directly along the same axis. \end{remark}

The following example exhibits two elements of a torus knot group acting hyperbolically along the same axis, where one acts directly and the other does not.

\begin{example}
Consider the torus knot group $G_{4,4}$. Let $\omega$ be the axis in the corresponding Bass-Serre tree $T$ through the edge $e_{\langle x^4 \rangle}$ comprised of all vertices of the form $wv_{\langle x \rangle}$ and $wv_{\langle y \rangle}$, where $w$ is a word in $x^2$ and $y^2$.

Then $h = x^2y^2x^2y^2$ is an element acting directly along this axis, since $h = (x^2y^2)^2$, where $x^2y^2$ has translation length 2, which is minimal as distinct vertices in the same orbit in $T$ are at distance at least 2. The element $hx^4$ also acts hyperbolically along $\omega$ (in fact it acts like $h$, since $x^4$ is in the centre so acts trivially). However $hx^4$ does not act directly. This is because it cannot be written as a proper power, and its translation length is 4, but we have just seen there is an element acting along $\omega$ with translation length 2.
\end{example}

Sometimes we will refer to the maximal cyclic subgroup containing $g \in G_{p,q}$, which we will write $\maxc{g}$. In general this is not well defined, for instance the group relation gives us $x^p = y^q$, but $\langle x \rangle$ and $\langle y \rangle$ are cyclic groups both containing $x^p$, neither of which contains the other. The following lemma justifies this notion for non-central elements. We will take care to only use this notion where it is well defined.

\begin{lemma}\label{maxc}
If $g \in G_{p,q}$ is not central, then $\maxc{g}$ is well defined.
\end{lemma}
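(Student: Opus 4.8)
The plan is to use the action of $G_{p,q}$ on its Bass--Serre tree $T$ to reduce the problem to a free abelian group of rank at most two, where uniqueness of a maximal cyclic overgroup is elementary. The point to isolate first is that, for a non-central $g$, \emph{every} cyclic subgroup of $G_{p,q}$ containing $g$ is contained in one subgroup $H_g$ determined by $g$ alone; once this is established, $\maxc{g}$ is forced to be the unique maximal cyclic subgroup of $H_g$ containing $g$.

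I would first split into the two cases afforded by Proposition \ref{geometricfixsubtree}: $g$ elliptic or $g$ hyperbolic. Since $g$ is non-central it cannot fix an edge of $T$, because edge stabilisers equal the centre $\langle x^p \rangle$ by Lemma \ref{centre}. Hence in the elliptic case $\Fix(g) = T^g$ is a non-empty subtree containing no edge, so it is a single vertex $v$, and $g \in G_v$, a conjugate of $\langle x \rangle$ or $\langle y \rangle$ and so infinite cyclic by Lemma \ref{bassserretheory}. If $\langle c \rangle \ni g$, write $g = c^m$ with $m \neq 0$; since powers of elliptic elements are elliptic, $c$ is elliptic, and $\Fix(c) \subseteq \Fix(c^m) = \{v\}$ is non-empty, giving $c \in G_v$. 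Thus every cyclic subgroup containing $g$ lies in $H_g := G_v \cong \mathbb{Z}$, which is therefore itself the unique maximal cyclic subgroup containing $g$.

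For hyperbolic $g$, let $\omega = T^g$ be its axis. If $\langle c \rangle \ni g$ with $g = c^m$, then $c$ is hyperbolic and preserves and translates along $\omega = T^{c^m} = T^g$; since the minimal set of a hyperbolic tree isometry is a line, it follows that $T^c = \omega$ as well. Hence $c \in \Stab(\omega)$ acts hyperbolically, so $c \in \Hyp(\omega)$, and by Lemma \ref{axissubgroup} every cyclic subgroup containing $g$ lies in $H_g := \Hyp(\omega) \cong \mathbb{Z}^2$.

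It then remains to record the linear-algebra fact that in a free abelian group of rank at most two a non-zero element lies in a unique maximal cyclic subgroup: writing $g = d\,w$ with $w$ primitive, any maximal cyclic overgroup is generated by a primitive vector on the line $\mathbb{Q}g$, and the only primitive vectors there are $\pm w$, which generate the same subgroup $\langle w \rangle$. Combining this with the two cases above shows that the maximal cyclic subgroup of $G_{p,q}$ containing $g$ exists and is unique, so $\maxc{g}$ is well defined. I expect the only real obstacle to be the verification that passing to a cyclic overgroup cannot enlarge the fixed vertex (elliptic case) or change the axis (hyperbolic case); both reduce to the behaviour of translation length under taking powers, together with Lemma \ref{centre} to rule out edge-fixing.
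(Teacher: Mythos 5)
Your proof is correct and follows essentially the same route as the paper: split into elliptic and hyperbolic cases using the fact that a non-central element fixes no edge, trap every root of $g$ in the vertex stabiliser $G_v$ (elliptic case) or in $\Hyp(T^g)\cong\mathbb{Z}^2$ via Lemma \ref{axissubgroup} (hyperbolic case), and conclude by uniqueness of maximal cyclic subgroups in $\mathbb{Z}$ and $\mathbb{Z}^2$. One cosmetic slip: to deduce that $c$ is elliptic from $g=c^m$ elliptic you should invoke the contrapositive of ``proper powers of hyperbolic elements are hyperbolic'' (equivalently $l(c^m)=|m|\,l(c)$), not ``powers of elliptic elements are elliptic,'' which is the converse direction.
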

\begin{proof}
Suppose $g$ is non-central. Then it does not fix any edges. Hence it is either hyperbolic, acting along an axis, or elliptic, fixing a single point.

Suppose $g$ is elliptic fixing a vertex $v$. Then suppose there is $h \in G_{p,q}$ and $k \in \mathbb{Z}$ such that $h^k = g$. Then $h$ is also elliptic, non-central, and therefore fixes a single vertex which must be $v$. So $h \in G_v$. Therfore any cyclic subgroup containing $g$ is a subgroup of $G_v$, which is cyclic and therefore maximal cyclic, so $\maxc{g} = G_v$.

Suppose $g$ is hyperbolic, so acts along an axis. Then suppose there is $h \in G_{p,q}$ and $k \in \mathbb{Z}$ such that $h^k = g$. Then $h$ must be hyperbolic, and must act along the same axis (as powers of an element act along the same axis). So, by Lemma \ref{axissubgroup}, $h \in \Hyp(T^g) \cong \mathbb{Z}^2$. So any cyclic subgroup containing $g$ is a subgroup of this copy of $\mathbb{Z}^2$. But in $\mathbb{Z}^2$ the notion of a maximal cyclic subgroup containing a non-trivial element is well defined, so $\maxc{g}$ is well defined.
\end{proof}

\subsection{The Automorphism Group of $G_{p,q}$}\label{sectionautgroup}

Recall for a group $G$, we write $\Inn(G)$ for the subgroup of $\Aut(G)$ comprised of the inner automorphisms, that is conjugations $\iota_g: h \mapsto ghg^{-1}$. The outer automorphism group is the quotient of the automorphism group by the inner automorphisms, $\Out(G) := \Aut(G) / \Inn(G)$. We will use $[\varphi]$ to denote the outer automorphism class of an automorphism $\varphi$.

The group $\Out(G_{p,q})$ was determined in \cite{GHMR99}.

\begin{notation}
Throughout this paper we will use $\tau$ and $\sigma$ for the following automorphisms of $G_{p,q}$, where $\sigma$ is an automorphism if and only if $p=q$: \begin{align*}
\tau:~& x \mapsto x^{-1}\\
& y \mapsto y^{-1}\\
\sigma:~& x \mapsto y\\
& y \mapsto x.
\end{align*}
\end{notation}

\begin{theorem}\textbf{\cite[Theorem C]{GHMR99}}
If $p \neq q$, $\Out(G_{p,q}) \cong C_2$, where a representative of the sole non-trivial outer automorphism class is $\tau$.

If $p = q$, $\Out(G_{p,q}) \cong C_2 \times C_2$, where a representative of the generators of each factor are given by $\tau$ and $\sigma$.
\end{theorem}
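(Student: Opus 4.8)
The plan is to prove both statements simultaneously by a reduction argument on the amalgam structure: I would produce every automorphism in the form $\iota_g\tau^r\sigma^s$, and then separate the resulting outer classes using the abelianisation. First I would check that $\tau$, and $\sigma$ when $p=q$, genuinely are automorphisms (each preserves the defining relation), and record the relations $\tau^2=\sigma^2=\mathrm{id}$ and $\tau\sigma=\sigma\tau$ by evaluating on the generators $x,y$. This shows $\langle\tau\rangle\cong C_2$ and, when $p=q$, $\langle\tau,\sigma\rangle\cong C_2\times C_2$ as a subgroup of $\Aut(G_{p,q})$. The substance of the theorem is then the two claims: (i) every automorphism lies in the subgroup generated by the inner automorphisms together with $\tau$ and (when $p=q$) $\sigma$; and (ii) none of $\tau$, $\sigma$, $\tau\sigma$ is inner.

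For claim (i), take $\varphi\in\Aut(G_{p,q})$. Since the centre $Z=\langle x^p\rangle$ is characteristic (Lemma \ref{centre}), $\varphi(Z)=Z$ and $\varphi(x^p)=x^{\pm p}$. The key invariant is that the vertex stabilisers and their conjugates are exactly the maximal cyclic subgroups $C$ with $Z\le C$ and $C/Z$ finite and nontrivial: such a $C$ is generated by an elliptic element (a hyperbolic element has no central power, since translation lengths add), hence fixes a vertex and lies in its stabiliser $g\langle x\rangle g^{-1}$ or $g\langle y\rangle g^{-1}$ (Lemma \ref{bassserretheory}, Lemma \ref{maxc}), and conversely these stabilisers have the required form with $[C:Z]\in\{p,q\}$. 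As $\varphi$ preserves $Z$, maximal cyclicity, and the index $[C:Z]$, it permutes this family while preserving the index. When $p\ne q$ this forces $\varphi(\langle x\rangle)$ to be conjugate to $\langle x\rangle$; when $p=q$ it may instead be conjugate to $\langle y\rangle$, and I would compose with $\sigma$ to undo the swap. After composing with a suitable inner automorphism (and possibly $\sigma$) I may assume $\varphi(\langle x\rangle)=\langle x\rangle$, so $\varphi(x)=x^{\epsilon}$ with $\epsilon=\pm1$ and $\varphi(y)=hy^{\delta}h^{-1}$; comparing $\varphi(x^p)=\varphi(y^q)$ forces $\epsilon=\delta$, and composing with $\tau$ if $\epsilon=-1$ reduces to $\varphi(x)=x$, $\varphi(y)=hyh^{-1}$.

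What remains, and what I expect to be the main obstacle, is to show this residual $\varphi$ is inner. Surjectivity of $\varphi$ gives $\langle\,\langle x\rangle,\ h\langle y\rangle h^{-1}\,\rangle=G_{p,q}$, i.e. the stabilisers of the vertices $v_{\langle x\rangle}$ and $h\,v_{\langle y\rangle}$ generate the whole group. Passing to the quotient $\bar G=G_{p,q}/Z\cong C_p\ast C_q$, which acts on the same tree with trivial edge stabilisers, I would run a ping-pong argument: if two vertices lie at distance at least $2$, their stabilisers generate their free product, a proper subgroup, so the generation hypothesis forces $v_{\langle x\rangle}$ and $h\,v_{\langle y\rangle}$ to be adjacent. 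Hence $h\,v_{\langle y\rangle}=x^k v_{\langle y\rangle}$ for some $k$, so $h\in x^k\langle y\rangle$; since $\langle x\rangle$ is its own centraliser, composing with $\iota_{x^{-k}}$ fixes $x$ and sends $\varphi(y)$ to $y$, giving $\varphi=\mathrm{id}$. Tracing back, the original automorphism is $\iota_g\tau^r\sigma^s$. Establishing the ping-pong step cleanly (or, alternatively, invoking the structure theory of automorphisms of free products of finite cyclic groups) is the technical heart of the argument.

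Finally I would separate the outer classes using the abelianisation $G_{p,q}^{\mathrm{ab}}\cong\mathbb{Z}^2/\langle(p,-q)\rangle$, which is infinite. Inner automorphisms act trivially on $G_{p,q}^{\mathrm{ab}}$, whereas $\tau$ acts as $-\mathrm{id}$; since the free part has positive rank, $[\tau]\ne[\mathrm{id}]$, and claim (i) then yields $\Out(G_{p,q})\cong C_2$ when $p\ne q$. When $p=q$, the action on the rank-one free part of $G_{p,p}^{\mathrm{ab}}$ defines a homomorphism $\Out(G_{p,p})\to\{\pm1\}$ sending $[\tau],[\tau\sigma]\mapsto-1$ and $[\mathrm{id}],[\sigma]\mapsto+1$ (since $\sigma$ identifies $\bar x$ and $\bar y$ after tensoring with $\mathbb{Q}$), while $[\sigma]\ne[\mathrm{id}]$ because $\bar x\ne\bar y$ already in $G_{p,p}^{\mathrm{ab}}$. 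These two facts separate all four classes $[\mathrm{id}],[\tau],[\sigma],[\tau\sigma]$, so the surjection $C_2\times C_2=\langle\tau,\sigma\rangle\to\Out(G_{p,p})$ is injective and $\Out(G_{p,p})\cong C_2\times C_2$.
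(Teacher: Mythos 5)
First, a framing point: the paper does not prove this statement at all --- it is quoted directly from \cite{GHMR99} (Theorem C there), so your attempt can only be compared with the cited source. Your route is genuinely different from, and more self-contained than, that one. In \cite{GHMR99} the classification is an application of their Theorem A, i.e.\ of the compatible action of $\Aut(G_{p,q})$ on the Bass--Serre tree (reproduced in this paper as Lemma \ref{actionofautgpq}): an automorphism $\varphi$ moves $v_{\langle x \rangle}$ to the unique vertex stabilised by $\varphi(\langle x \rangle)$, and since $\varphi$ acts as an \emph{isometry}, the images of $v_{\langle x \rangle}$ and $v_{\langle y \rangle}$ are again adjacent; after composing with an inner automorphism (and $\sigma$ when $p=q$) this immediately pins $\varphi$ down to $\tau^r$ up to conjugation. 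You instead characterise the conjugates of $\langle x \rangle$ and $\langle y \rangle$ purely algebraically (maximal cyclic subgroups containing the characteristic centre $Z$ with finite non-trivial quotient, separated by the index $p$ versus $q$), and replace the isometry step by a ping-pong argument in $G_{p,q}/Z \cong C_p \ast C_q$. What your approach buys is independence from Theorem A of \cite{GHMR99}; what it costs is exactly the step you flag as the technical heart. That step is true and standard (non-trivial elements of vertex stabilisers of $C_p \ast C_q$ fix a unique vertex since edge stabilisers are trivial; ping-pong embeds the Bass--Serre tree of $A \ast B$ into $T$ with the edges of the geodesic in distinct orbits, while $C_p \ast C_q$ is edge-transitive), but note you could bypass it entirely with the paper's Lemma \ref{actionofautgpq}: your residual $\varphi$ fixes $v_{\langle x \rangle}$ and sends $v_{\langle y \rangle}$ to $hv_{\langle y \rangle}$, so preservation of distance forces $d(v_{\langle x \rangle}, hv_{\langle y \rangle}) = 1$, whence $h \in x^k\langle y \rangle$ at once --- this is precisely the cited proof's shortcut.

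There is one step you do not address, and it is a genuine (though easily repaired) gap. When $p=q$, after arranging $\varphi(\langle x \rangle)$ to be conjugate to $\langle x \rangle$, you write $\varphi(y) = hy^{\delta}h^{-1}$, i.e.\ you assume $\varphi(\langle y \rangle)$ is conjugate to $\langle y \rangle$; but the index invariant $[C:Z]$ no longer separates the two conjugacy families, so a priori $\varphi$ could send \emph{both} $\langle x \rangle$ and $\langle y \rangle$ to conjugates of $\langle x \rangle$. This is ruled out by surjectivity: conjugates of $\langle x \rangle$ generate the normal closure $\langle\langle x \rangle\rangle$, which is proper since $G_{p,p}/\langle\langle x \rangle\rangle \cong C_p \neq 1$. (One should also record that $\langle x \rangle$ is not conjugate to $\langle y \rangle$, which holds because distinct vertices of $T$ have distinct stabilisers: a common stabiliser would fix an edge, but vertex stabilisers properly contain $Z$.) With that patch, and the ping-pong step either carried out or replaced as above, your argument is correct; in particular the abelianisation argument separating $[\mathrm{id}]$, $[\tau]$, $[\sigma]$, $[\tau\sigma]$ --- inner automorphisms act trivially on $G^{\mathrm{ab}}$, $\tau$ negates an infinite group, and $\sigma$ moves $\bar{x} \neq \bar{y}$ while acting trivially on the free quotient --- is fine as written.
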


Now we describe an action of $\Aut(G_{p,q})$ on $T$. The action will be compatible with the action of $G_{p,q}$ on $T$, in the sense of Definition \ref{compatibleaction}. This action, and its compatibility, were originally described in \cite{GHMR99}, and were the main tool used to calculate $\Out(G_{p,q})$.

In order to keep the distinction between the two actions clear, throughout this paper we will write $\cdot$ for the action of $\Aut(G)$, and leave the action of $G$ implicit by writing $gx$ for the action of $g \in G$ on a point $x$.

Note that to define an action on a tree by isometries we only need to give an action on the vertices which respects the distance function.

The action is given by the statement of Lemma \ref{actionofautgpq}, which is an application of Theorem A from \cite{GHMR99}. 

\begin{lemma}\label{actionofautgpq}
$Aut(G_{p,q})$ acts on $T$ by isometries with the following action: for $\psi \in \Aut(G_{p,q})$ and $v$ a vertex, $\psi \cdot v$ is the unique vertex with stabiliser $\psi(G_v)$.
\end{lemma}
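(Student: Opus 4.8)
The plan is to verify the proposed assignment $\psi \cdot v := \text{(the vertex with stabiliser } \psi(G_v))$ really defines a well-formed simplicial isometric action. I read the statement of Lemma \ref{actionofautgpq} as asserting three things at once: (i) for each vertex $v$ and each $\psi \in \Aut(G_{p,q})$ there is a \emph{unique} vertex whose stabiliser equals the subgroup $\psi(G_v)$; (ii) the resulting map $v \mapsto \psi \cdot v$ respects the tree metric, i.e. is a simplicial isometry; and (iii) the formula $\psi \mapsto (v \mapsto \psi \cdot v)$ is a group action, meaning it is compatible with composition in $\Aut(G_{p,q})$ and sends the identity to the identity. Since the note preceding the lemma says defining the action on vertices respecting distances suffices, the real content is (i) well-definedness and (ii) the isometry property; the group-action axioms in (iii) should then fall out formally.

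Let me think about this carefully. The key structural fact I would lean on is Lemma \ref{bassserretheory}, which identifies vertex stabilisers exactly: $G_{g v_{\langle x \rangle}} = g \langle x \rangle g^{-1}$ and $G_{g v_{\langle y \rangle}} = g \langle y \rangle g^{-1}$. So the vertices of $T$ are in bijection with the conjugates of $\langle x \rangle$ together with the conjugates of $\langle y \rangle$, via $v \mapsto G_v$. The crux is therefore to show that for any vertex $v$, the subgroup $\psi(G_v)$ is \emph{again} a conjugate of $\langle x \rangle$ or of $\langle y \rangle$, so that it is the stabiliser of a (necessarily unique, by that same bijection) vertex. This is exactly where I expect to invoke the cited Theorem A of \cite{GHMR99}: an automorphism must permute the maximal cyclic/vertex subgroups of the splitting in a way compatible with the tree structure. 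For the uniqueness half of (i) I would argue that the map $v \mapsto G_v$ is injective --- two distinct vertices cannot share a stabiliser, since a vertex stabiliser $g\langle x\rangle g^{-1}$ fixes only the single vertex $gv_{\langle x\rangle}$ (a non-central cyclic group is elliptic and, by the argument in Lemma \ref{maxc}, fixes a unique vertex). Thus the vertex realising a given admissible stabiliser is unique once it exists.

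For the isometry property (ii), the plan is to reduce distance-preservation to preservation of adjacency, since a self-bijection of a tree's vertex set that preserves adjacency (edges) automatically preserves graph distance and extends to a simplicial isometry. Two vertices $u, v$ are adjacent in $T$ exactly when $G_u \cap G_v = G_e$ is the stabiliser of the connecting edge $e$; by Lemma \ref{centre} every edge stabiliser is the centre $\langle x^p\rangle = Z(G_{p,q})$. The point is that $\psi$ sends the centre to the centre (any automorphism fixes $Z(G_{p,q})$ setwise), so it preserves the relation ``$G_u \cap G_v = Z(G_{p,q})$'' that characterises adjacency. Concretely, if $u$ and $v$ are adjacent then $\psi(G_u) \cap \psi(G_v) = \psi(G_u \cap G_v) = \psi(Z) = Z$, which is the edge-stabiliser condition witnessing that $\psi\cdot u$ and $\psi\cdot v$ are adjacent. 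Hence $\psi\cdot(-)$ carries edges to edges and is a simplicial isometry.

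\textbf{The main obstacle}, I expect, is the precise justification that $\psi(G_v)$ is always a vertex stabiliser --- that is, that an automorphism cannot map a conjugate of $\langle x\rangle$ to something that is neither a conjugate of $\langle x\rangle$ nor of $\langle y\rangle$. This is not formal: it is genuinely a fact about which cyclic subgroups of $G_{p,q}$ arise as vertex groups, and it is the substance extracted from Theorem A of \cite{GHMR99} (one needs, for instance, that $\langle x\rangle$ and $\langle y\rangle$ are the maximal cyclic subgroups properly containing the centre, a property preserved by automorphisms, with the indices $p$ and $q$ distinguishing the two cases when $p \neq q$). Once this admissibility is granted by the cited theorem, the remaining verifications --- uniqueness, adjacency-preservation, and the functoriality $(\psi_1\psi_2)\cdot v = \psi_1\cdot(\psi_2\cdot v)$, which follows since stabilisers transform by $G_{\psi\cdot v} = \psi(G_v)$ and composition of automorphisms matches composition of the conjugation-type actions on subgroups --- are routine.
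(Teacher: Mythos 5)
There is a genuine gap, and it sits exactly where you placed the weight of your argument: the adjacency criterion is false. You claim that two vertices $u,v$ are adjacent precisely when $G_u \cap G_v = Z(G_{p,q})$, and you use preservation of this relation to conclude that $\psi\cdot(-)$ is simplicial. But in this tree the relation $G_u \cap G_v = Z(G_{p,q})$ holds for \emph{every} pair of distinct vertices, adjacent or not: the centre lies in every vertex stabiliser (it stabilises everything, by Lemma \ref{centre} and Lemma \ref{bassserretheory}), and conversely any element fixing two distinct vertices fixes the geodesic between them (Proposition \ref{geometricfixsubtree}), hence fixes some edge, hence lies in $\langle x^p \rangle$. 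For a concrete counterexample, take $u = v_{\langle x \rangle}$ and $v = y\,v_{\langle x \rangle}$: these are non-adjacent (they are distinct vertices of the same type in a bipartite tree), yet $\langle x \rangle \cap y\langle x \rangle y^{-1} = \langle x^p \rangle = Z(G_{p,q})$. So your argument establishes only that $\psi\cdot(-)$ is injective, not that it preserves adjacency or distance. This is not a repairable slip within your framework: because \emph{all} edge stabilisers coincide with the centre, the simplicial structure of $T$ simply cannot be reconstructed from stabilisers via intersections, and recovering it is the genuinely hard part of the statement.

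This is also where your proof diverges from the paper's. The paper does not rebuild the action by hand; it invokes Theorem A of \cite{GHMR99} for the \emph{entire} conclusion (existence of the isometric action extending that of $G_{p,q}$), and the content of its proof is the verification of that theorem's hypotheses: the action of $G_{p,q}$ on $T$ is \emph{proper} and \emph{locally tame}, because each vertex stabiliser $G_v$ acts transitively on the edges at $v$ and each vertex has at least two adjacent edges. Those hypotheses are precisely what substitutes for the missing adjacency-detection in your argument. You instead use Theorem A only as a black box for the admissibility claim (that $\psi(G_v)$ is again a vertex stabiliser) and never check properness or local tameness, while attempting to supply the isometry property yourself — and that supplied step is the one that fails. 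Your uniqueness argument (a non-central elliptic element fixes a unique vertex, so $v \mapsto G_v$ is injective) is correct, but to complete the proof along the paper's lines you should cite Theorem A for the full statement and verify its hypotheses, rather than re-derive the simplicial property from stabiliser intersections.
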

\begin{proof}
This is a special case of Theorem A in \cite{GHMR99}. For completeness we check the hypothesis of Theorem A apply, but do not define them here as they will not be used again. The action of $G_{p,q}$ on $T$ is \emph{proper} and \emph{locally-tame} in the sense of that paper, because for each vertex $v$ of $T$ there is one orbit of edges adjacent to $v$ under the action of $G_v$ (this gives local-tameness), and each vertex has at least 2 adjacent edges (since each of these edges is in the same orbit, this gives properness).
\end{proof}

Notice that, although $G_{p,q}$ acts without inversions, the action of $Aut(G_{p,q})$ may well have inversions.

Recall $e_{\langle x^p \rangle}$ is a fundamental domain for the action of $G_{p,q}$ on $T$.

\begin{lemma}
The action of the automorphism $\tau$ fixes $e_{\langle x^p \rangle}$ pointwise.

When $p=q$ and the automorphism $\sigma$ exists, the action of the automorphism $\sigma$ exchanges $v_{\langle x \rangle}$ and $v_{\langle y \rangle}$, inverting the edge $e_{\langle x^p \rangle}$.
\end{lemma}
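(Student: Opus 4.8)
The plan is to apply Lemma \ref{actionofautgpq}, which reduces the whole statement to tracking what each automorphism does to the stabilisers of the two vertices $v_{\langle x \rangle}$ and $v_{\langle y \rangle}$ spanning the fundamental edge $e_{\langle x^p \rangle}$. By Lemma \ref{bassserretheory} these stabilisers are $G_{v_{\langle x \rangle}} = \langle x \rangle$ and $G_{v_{\langle y \rangle}} = \langle y \rangle$, and by Lemma \ref{actionofautgpq} the vertex $\psi \cdot v$ is characterised as the unique vertex whose stabiliser is $\psi(G_v)$. So for each of $\tau$ and $\sigma$ I would simply compute the image subgroup and read off which vertex it stabilises.

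For $\tau$, since $\tau(x) = x^{-1}$ we have $\tau(\langle x \rangle) = \langle x^{-1}\rangle = \langle x \rangle$, and similarly $\tau(\langle y \rangle) = \langle y \rangle$. Hence $\tau \cdot v_{\langle x \rangle} = v_{\langle x \rangle}$ and $\tau \cdot v_{\langle y \rangle} = v_{\langle y \rangle}$, so both endpoints of $e_{\langle x^p \rangle}$ are fixed. For $\sigma$ (with $p = q$), we get $\sigma(\langle x \rangle) = \langle y \rangle$ and $\sigma(\langle y \rangle) = \langle x \rangle$, so $\sigma \cdot v_{\langle x \rangle} = v_{\langle y \rangle}$ and $\sigma \cdot v_{\langle y \rangle} = v_{\langle x \rangle}$, giving the claimed exchange of the two vertices.

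The one point requiring care is passing from the vertex computation to the stated behaviour on the edge, since Lemma \ref{actionofautgpq} is phrased for vertices. Here I would use that the action is simplicial by isometries and that in a tree an edge is the unique geodesic between its adjacent endpoints: an isometry fixing both endpoints of $e_{\langle x^p \rangle}$ must fix the edge pointwise, giving the first claim for $\tau$, while an isometry exchanging the two endpoints sends the edge to the unique edge with the same endpoints, namely itself with reversed orientation, which is precisely an inversion. As a consistency check one can also note that $\tau(\langle x^p \rangle) = \langle x^p \rangle$ and, when $p = q$, $\sigma(\langle x^p \rangle) = \langle y^p \rangle = \langle x^p \rangle$, so both automorphisms preserve the edge stabiliser and hence fix $e_{\langle x^p \rangle}$ setwise, consistent with the above. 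I do not anticipate any genuine obstacle; the content is entirely in the bookkeeping of stabilisers, and the only subtlety is the standard edge-versus-vertex point just described.
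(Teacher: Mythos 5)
Your proposal is correct and follows essentially the same route as the paper: the paper's proof likewise invokes Lemma \ref{actionofautgpq} together with the observation that $\tau$ preserves $\langle x \rangle$ and $\langle y \rangle$ while $\sigma$ exchanges them. Your extra care in passing from the vertex computation to the edge (pointwise fixing versus inversion) is a detail the paper leaves implicit, and it is handled correctly.
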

\begin{proof}
This follows from the description of the action in Lemma \ref{actionofautgpq} and the fact that $\tau$ preserves $\langle x \rangle$ and $\langle y \rangle$, whilst $\sigma$ exchanges them.
\end{proof}

Note that in the case of $\tau$ the fundamental domain needn't be, and in general won't be, the entirety of $T^{\tau}$ (the set of points on the tree fixed by $\tau$).

Intuitively it is instructive to think of $\tau$ as a reflection over the edge that forms the fundamental domain, and $\sigma$ as a reflection across a line bisecting it. Of course this description depends on a particular embedding into 2-dimensional space, but will be useful geometric intuition. Figure \ref{diagramOfAction} depicts this point of view.

\begin{figure}[H]
\centering
\begin{tikzpicture} [%
    nd/.style = {circle,fill=white,text=black,inner sep=0pt},
    tn/.style = {node distance=1pt},
    ed/.style={-,black,fill=none},
    sm/.style={-, dashed,red,fill=none}]

    \node[nd] (x) at (0,0) {\sffamily $\langle x \rangle$};
    \node[nd] (y) [right=of x] {\sffamily $\langle y \rangle$};
    
    \node[nd] (xy) [above left=of x] {\sffamily $x \langle y \rangle$};
    \node[nd] (x^2y) [below left=of x] {\sffamily $x^2 \langle y \rangle$};
    \node[nd] (yx) [above right=of y] {\sffamily $y \langle x \rangle$};
    \node[nd] (y^2x) [below right=of y] {\sffamily $y^2 \langle x \rangle$};
    
    \node[nd] (xyx) [above=of xy] {\sffamily $xy \langle x \rangle$};
    \node[nd] (xy^2x) [left=of xy] {\sffamily $xy^2 \langle x \rangle$};
    \node[nd] (x^2yx) [left=of x^2y] {\sffamily $x^2y \langle x \rangle$};
    \node[nd] (x^2y^2x) [below=of x^2y] {\sffamily $x^2y^2 \langle x \rangle$};
    \node[nd] (yxy) [above=of yx] {\sffamily $yx \langle y \rangle$};
    \node[nd] (yx^2y) [right=of yx] {\sffamily $yx^2 \langle y \rangle$};
    \node[nd] (y^2xy) [right=of y^2x] {\sffamily $y^2x \langle y \rangle$};
    \node[nd] (y^2x^2y) [below=of y^2x] {\sffamily $y^2x^2 \langle y \rangle$};
    
    \node[nd] (taulef) at (-4,0) {\sffamily $\tau$};
    \node[nd] (taurig) at (5.6, 0) {\sffamily $\tau$};
	\node[nd] (sigtop) at (0.8,4) {\sffamily $\sigma$};
    \node[nd] (sigbot) at (0.8, -4) {\sffamily $\sigma$};
    
    \draw[ed] (x) -- (y);
    
    \draw[ed] (x) -- (xy);
    \draw[ed] (x) -- (x^2y);
    \draw[ed] (y) -- (yx);
    \draw[ed] (y) -- (y^2x);
    
    \draw[ed] (xy) -- (xyx);
    \draw[ed] (xy) -- (xy^2x);
    \draw[ed] (x^2y) -- (x^2yx);
    \draw[ed] (x^2y) -- (x^2y^2x);
    \draw[ed] (yx) -- (yxy);
    \draw[ed] (yx) -- (yx^2y);
    \draw[ed] (y^2x) -- (y^2xy);
    \draw[ed] (y^2x) -- (y^2x^2y);
    
    \draw[sm] (taulef) -- (taurig);
	\draw[sm] (sigtop) -- (sigbot);
    
\end{tikzpicture}

\caption{\label{diagramOfAction} A fragment of the Bass-Serre tree for $G_{3,3}$. When the tree is drawn appropriately, the actions of $\tau$ and $\sigma$ can be seen as reflections across the labelled axes.}
\end{figure}
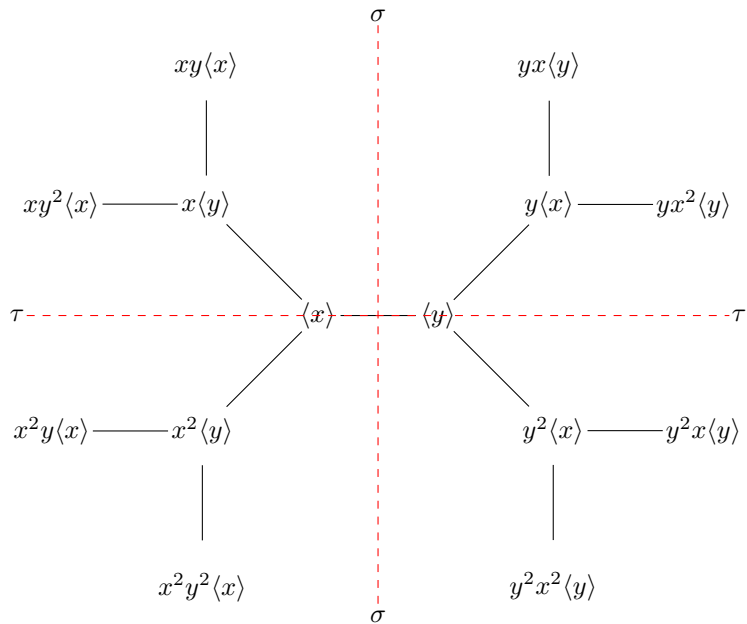

\section{Compatible Actions}\label{compatiblesection}

In this section we discuss the general setting of a group and its automorphism group acting on a set $X$ (at this stage with no further structure), in a compatible way, in the sense of the following definition. The action in the previous section will be a particular case of this setup.

\begin{definition}\label{compatibleaction}
Let $G$ be a group acting on a set $X$ via $\Omega: G \rightarrow \Aut(X)$. We say $\Aut(G)$ has a \emph{compatible} action on $X$ if $\Psi: \Aut(G) \rightarrow \Aut(X)$ is an action of $\Aut(G)$ on $X$ such that $\Psi \circ i = \Omega$, where $i: G \rightarrow \Aut(G)$ is the homomorphism $g \mapsto \iota_g$.
\end{definition}

This should be understood that an action of $\Aut(G)$ on $X$ is compatible with the action of $G$ on $X$ if and only if $\iota_g \cdot x = gx$ for each $x \in X$ and each inner automorphism $\iota_g$ of $G$.

\begin{lemma}\label{compatibilitylemma}
Let $G$ be a group acting on a set $X$ such that $\Aut(G)$ acts on $X$ in a compatible way. Then for all $\varphi \in \Aut(G)$, $g \in G$, and $x \in X$, $$\varphi \cdot gx = \varphi(g) (\varphi \cdot x).$$
\end{lemma}
\begin{proof}
\begin{align*}
\varphi \cdot gx &= \varphi \iota_g \cdot x\\
&= \iota_{\varphi(g)} \varphi \cdot x\\
&= \varphi(g) (\varphi \cdot x).\\
\end{align*}

The first and third equalities follow from compatibility, and the second is because $\varphi \circ \iota_g = \iota_{\varphi(g)} \circ \varphi$ in the automorphism group.
\end{proof}

We will most often use this lemma in the form of the following immediate corollaries.  In this setting, where $X$ is not necessarily a metric space, $$X^\varphi := \{x \in X | \varphi \cdot x = x\},$$ is the set of points fixed by $\varphi$.

\begin{corollary}\label{commutingactions}
If $G$ and $X$ are as above, then for all $\varphi \in \Aut(G)$, $g \in \Fix(\varphi)$, and $x \in X$,  $\varphi \cdot gx = g(\varphi \cdot x)$.
\end{corollary}

\begin{corollary}\label{phiaction}
If $G$ and $X$ are as above, then for all $\varphi \in \Aut(G)$, $g \in G$, and $x \in X^{\varphi}$, $\varphi \cdot gx = \varphi(g)x$.
\end{corollary}

The next corollary connects compatible actions to fixed points of automorphisms.

\begin{corollary}\label{fixaction}(See also \cite[Proposition 3.9]{S02})
If $G$ and $X$ are as above, then for all $\varphi \in \Aut(G)$, $\Fix(\varphi) \subseteq \Stab(X^\varphi)$. 
\end{corollary}
\begin{proof}
Let $g \in \Fix(\varphi)$. Then Corollary \ref{commutingactions} and Corollary \ref{phiaction} show that for $x \in X^\varphi$, $\varphi \cdot gx = gx$, so $gx \in X^\varphi$ also. Therefore $g \in \Stab(X^\varphi)$.
\end{proof}

If $X$ is a metric space with $G$ and $\Aut(G)$ acting by isometries, we can generalise our definition of $X^\varphi$ to the points where $\varphi$ achieves its translation length, as in Section \ref{isosoftrees}. That is $X^\varphi := \{x \in X | d(\varphi \cdot x, x) = l(\varphi)\}$. Corollary \ref{transaction} adapts Corollary \ref{fixaction} to this setting, where it still holds.

\begin{corollary}\label{transaction}
Suppose $G$ and $X$ are as above, and $X$ is a metric space with $G$ and $\Aut(G)$ acting by isometries. Then for all $\varphi \in \Aut(G)$, $\Fix(\varphi) \subseteq \Stab(X^\varphi)$.
\end{corollary}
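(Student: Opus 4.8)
The plan is to mirror the proof of Corollary \ref{fixaction}, replacing the fixed-point condition with the min-displacement condition and using that $G$ acts by isometries. First I would take $g \in \Fix(\varphi)$ and an arbitrary $x \in X^\varphi$, so that by definition $d(\varphi \cdot x, x) = l(\varphi)$. The goal is to show $gx \in X^\varphi$; since $x$ was arbitrary this gives $g \in \Stab(X^\varphi)$, and hence the desired inclusion $\Fix(\varphi) \subseteq \Stab(X^\varphi)$.

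The crucial step is to rewrite the action of $\varphi$ on the translate $gx$. Because $g \in \Fix(\varphi)$, Corollary \ref{commutingactions} applies and gives $\varphi \cdot gx = g(\varphi \cdot x)$. I would then compute the displacement of $\varphi$ at $gx$:
\[
d(\varphi \cdot gx, gx) = d\bigl(g(\varphi \cdot x), gx\bigr) = d(\varphi \cdot x, x) = l(\varphi),
\]
where the middle equality uses that $g$ acts by isometries, so it preserves distances, and the final equality is precisely the hypothesis $x \in X^\varphi$. Since $d(\varphi \cdot gx, gx) = l(\varphi)$ is exactly the defining condition for membership in $X^\varphi$, we conclude $gx \in X^\varphi$.

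I expect no serious obstacle here: the argument is a direct adaptation of Corollary \ref{fixaction}, and the single new ingredient is that $g$ acts isometrically, which is what allows the displacement value to be transported from $x$ to $gx$ verbatim. The only point worth stating carefully is the reduction performed by Corollary \ref{commutingactions}, namely that elements of $\Fix(\varphi)$ commute with $\varphi$ in the action; once this is in place, isometry invariance of the displacement function does the rest, and there is no need to invoke any structure on $X$ beyond its being a metric space on which both $G$ and $\Aut(G)$ act by isometries.
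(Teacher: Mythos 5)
Your proof is correct and takes essentially the same approach as the paper: both apply Corollary \ref{commutingactions} to rewrite $\varphi \cdot gx = g(\varphi \cdot x)$ and then use that $g$ acts by isometries to transport the displacement $l(\varphi)$ from $x$ to $gx$, concluding $gx \in X^\varphi$. The paper's version is merely terser, leaving implicit the final observation that the preserved displacement is exactly the membership condition for $X^\varphi$.
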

\begin{proof}
For arbitrary $g \in \Fix(\varphi)$, notice that, $$d_X(gx, \varphi \cdot gx) = d_X(gx, g(\varphi \cdot x)) = d_X(x, \varphi \cdot x),$$ where the first equality is by Corollary \ref{commutingactions} and the second is since $g$ acts by isometries. Therefore $g \in \Stab(X^\varphi)$.
\end{proof}

\section{Fixed Points of Inner Automorphisms}

We return to the groups $G_{p,q} = \langle x, y | x^p = y^q \rangle$. The results in this section are well known, but we provide self-contained proofs using the compatible action.

Notice that understanding the fixed points of $\iota_g$ amounts to understanding $C(g)$, the centralizer of g, because $\iota_g(h) = h$ if and only if $gh = hg$. Recall that $\maxc{g}$ is the maximal cyclic subgroup containing $g$, and for non-central elements this is well defined by Lemma \ref{maxc}.

\begin{lemma}\label{ellipticcentralizers}
Let $g \in G_{p,q}$ be elliptic and not in the centre. Then $\Fix(\iota_g) = C(g) = \maxc{g} \cong \mathbb{Z}$.
\end{lemma}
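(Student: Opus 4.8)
The plan is to prove the chain of equalities $\Fix(\iota_g) = C(g) = \maxc{g} \cong \mathbb{Z}$ by establishing each link separately. The first equality $\Fix(\iota_g) = C(g)$ is essentially definitional and already noted in the paragraph preceding the statement: $\iota_g(h) = h$ exactly when $ghg^{-1} = h$, i.e.\ when $h$ commutes with $g$. So the real content is $C(g) = \maxc{g}$, together with the identification $\maxc{g} \cong \mathbb{Z}$. Since $g$ is elliptic and non-central, Proposition \ref{geometricfixsubtree} together with Lemma \ref{centre} tells us $g$ fixes a non-empty subtree but no edge (a non-central element cannot fix an edge, as every edge stabiliser is the centre $\langle x^p \rangle$), so $g$ fixes exactly one vertex $v$; I would record this at the outset. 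Note also $\maxc{g}$ is well defined here by Lemma \ref{maxc}.

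First I would show $C(g) \subseteq G_v$, where $v$ is the unique vertex fixed by $g$. The key geometric observation is that conjugation permutes fixed-point sets: for any $h \in G_{p,q}$, the element $hgh^{-1}$ fixes precisely the vertex $hv$. If $h \in C(g)$, then $hgh^{-1} = g$, so $g$ fixes both $v$ and $hv$; by uniqueness of the fixed vertex, $hv = v$, i.e.\ $h \in G_v$. This is the cleanest way to pin down the centralizer geometrically and it is the step I expect to carry the main weight of the argument. Conversely, since $G_v$ is cyclic (it is a conjugate of $\langle x \rangle$ or $\langle y \rangle$ by Lemma \ref{bassserretheory}) and contains $g$, and cyclic groups are abelian, every element of $G_v$ commutes with $g$, giving $G_v \subseteq C(g)$. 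Hence $C(g) = G_v$.

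It remains to identify $G_v$ with $\maxc{g}$. Here I would invoke the argument already packaged inside the proof of Lemma \ref{maxc}: for a non-central elliptic $g$ fixing the vertex $v$, any cyclic subgroup containing $g$ lies inside $G_v$ (since any $h$ with $h^k = g$ is itself non-central and elliptic, hence fixes the same unique vertex $v$, so $h \in G_v$), and $G_v$ is itself cyclic, whence $\maxc{g} = G_v$. Finally $G_v = g'\langle x \rangle g'^{-1}$ or $g'\langle y \rangle g'^{-1}$ for some $g'$, which is infinite cyclic, so $\maxc{g} \cong \mathbb{Z}$. Assembling the three identifications $\Fix(\iota_g) = C(g)$, $C(g) = G_v$, and $G_v = \maxc{g} \cong \mathbb{Z}$ completes the proof. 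The only subtlety to handle carefully is the uniqueness of the fixed vertex, which relies on the fact that a non-central element fixes no edge; I would state this explicitly since it underpins both the $C(g) \subseteq G_v$ inclusion and the maximal-cyclic identification.
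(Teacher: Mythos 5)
Your proposal is correct, and every step checks out: the uniqueness of the fixed vertex (no edge can be fixed since edge stabilisers are central), the inclusion $C(g) \subseteq G_v$ via the observation that $hgh^{-1}$ fixes exactly $hv$, the reverse inclusion from commutativity of the cyclic group $G_v$, and the identification $G_v = \maxc{g}$ via the root argument from Lemma \ref{maxc}. However, your route differs from the paper's in one notable way: you never use the compatible action of $\Aut(G_{p,q})$ on $T$. The paper's proof runs the same geometry through that machinery: by compatibility $\iota_g$ acts on $T$ exactly as $g$ does, so $T^{\iota_g}$ is the single vertex $v$, and Corollary \ref{fixaction} immediately gives $\Fix(\iota_g) \leq \Stab(v) = \maxc{g}$, with the same abelian-ness argument for the reverse inclusion. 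Your version is the classical, self-contained argument for centralizers of elliptic elements in a group acting on a tree --- conjugation translates fixed-point sets, so centralizing elements stabilise the unique fixed vertex --- and it is arguably more elementary, since it needs nothing beyond the $G$-action itself. What it cannot do, and what the paper's framing buys, is generalise to non-inner automorphisms: your argument uses crucially that $\iota_g$ \emph{is} conjugation by a group element, whereas Corollary \ref{fixaction} applies verbatim to the automorphisms $\tau_g$ and $\varphi_{g,r}$ treated in Sections 5 and 6, which is precisely why the paper (which states that these centralizer results are well known) chooses to reprove them as a warm-up for that machinery. The two arguments are mathematically equivalent here --- unwinding Corollary \ref{fixaction} for $\varphi = \iota_g$ recovers exactly your computation $g(hv) = h(gv) = hv$ --- so the difference is one of packaging rather than substance.
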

\begin{proof}
Since, by Lemma \ref{centre}, the centre is the stabiliser of each edge, $g$ fixes no edges. So $g$ fixes a single vertex, call it $v$. By compatibility, $\iota_g$ acts like $g$ did. By Corollary \ref{fixaction}, $\Fix(\iota_g) \leq \Stab(v) = \maxc{g}$. 

Conversely $g \in \maxc{g}$, and $\maxc{g}$ is abelian so $\maxc{g} \leq C(g) = \Fix(\iota_g)$. This completes the proof.
\end{proof}

To handle hyperbolic elements, we make use of the remark following Definition \ref{directdefn}, and only consider conjugation by elements which act directly. Such a representative exists for any inner automorphism corresponding to a hyperbolic element.

\begin{lemma}\label{hyperboliccentralizers}
Let $g \in G_{p,q}$ be hyperbolic. Further assume $g$ acts directly along its axis. Then $\Fix(\iota_g) = C(g) = \maxc{g} \times \langle x^p \rangle \cong \mathbb{Z}^2$.
\end{lemma}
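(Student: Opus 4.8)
The plan is to mirror the proof of Lemma \ref{ellipticcentralizers}, replacing the fixed vertex with the translation axis. First I would note that $\Fix(\iota_g) = C(g)$, so it suffices to compute the centralizer. By compatibility the automorphism $\iota_g$ acts on $T$ exactly as $g$ does, so $\iota_g$ is hyperbolic with the same axis and translation length; in the metric sense of Section \ref{isosoftrees} we have $T^{\iota_g} = T^g$, the axis of $g$. Corollary \ref{transaction} then gives $\Fix(\iota_g) \subseteq \Stab(T^{\iota_g})$, that is $C(g) \subseteq \Stab(T^g)$.

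Next I would upgrade this containment from $\Stab(T^g)$ to $\Hyp(T^g)$. Take $h \in C(g)$, so that $h \in \Stab(T^g)$. If $h$ acts on the line $T^g$ by a translation, or fixes $T^g$ pointwise, then $h \in \Hyp(T^g)$ by definition. The only remaining possibility is that $h$ is elliptic and reverses the orientation of $T^g$, fixing a point but swapping the two ends. In that case $hgh^{-1}$ translates along $T^g$ in the direction opposite to $g$, so $hgh^{-1} \neq g$, contradicting $h \in C(g)$. Hence $C(g) \subseteq \Hyp(T^g)$. By Lemma \ref{axissubgroup}, $\Hyp(T^g) = \langle \hat{g} \rangle \times \langle x^p \rangle \cong \mathbb{Z}^2$, where $\hat{g}$ has minimal translation length along $T^g$. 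Since $g$ acts directly we may write $g = \hat{g}^k$ with $k \neq 0$, and inside this $\mathbb{Z}^2$ the subgroup $\langle \hat{g} \rangle$ is then the unique maximal cyclic subgroup containing $g$; by Lemma \ref{maxc} this means $\maxc{g} = \langle \hat{g} \rangle$, and therefore $C(g) \subseteq \maxc{g} \times \langle x^p \rangle$.

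For the reverse inclusion I would observe that $\hat{g}$ commutes with its own power $g = \hat{g}^k$, so $\maxc{g} = \langle \hat{g} \rangle \leq C(g)$, and that $\langle x^p \rangle = Z(G_{p,q})$ by Lemma \ref{centre} commutes with everything, so $\langle x^p \rangle \leq C(g)$; together these give $\maxc{g} \times \langle x^p \rangle \leq C(g)$. Combining the two inclusions yields the claimed equality, with the isomorphism to $\mathbb{Z}^2$ inherited directly from Lemma \ref{axissubgroup}.

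The main obstacle is precisely the refinement from $\Stab(T^g)$ to $\Hyp(T^g)$, i.e. ruling out orientation-reversing elliptic stabilisers of the axis. The key point is that any isometry reversing the orientation of the axis conjugates $g$ to an element translating in the opposite direction, differing from $g^{-1}$ only by a central element fixing the axis, and so it cannot centralise the hyperbolic element $g$. Everything else is bookkeeping transferring the $\mathbb{Z}^2$ structure from Lemma \ref{axissubgroup} and the identification $\maxc{g} = \langle \hat{g} \rangle$ afforded by the hypothesis that $g$ acts directly.
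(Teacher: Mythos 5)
Your proof is correct, and its skeleton is the same as the paper's: compatibility plus Corollary \ref{transaction} to get $C(g) \leq \Stab(T^g)$, a reduction from $\Stab(T^g)$ to $\Hyp(T^g)$, then Lemma \ref{axissubgroup} together with the directness hypothesis to identify $\Hyp(T^g) = \maxc{g} \times \langle x^p \rangle$, and finally the easy reverse inclusion. The one step you handle differently is the reduction to $\Hyp(T^g)$. The paper rules out non-central elliptic elements of $C(g)$ by citing Lemma \ref{ellipticcentralizers}: if such an $h$ commuted with $g$, then $g \in C(h) = \maxc{h}$, a vertex stabiliser consisting only of elliptic elements, contradicting hyperbolicity of $g$. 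You instead argue geometrically: an elliptic element of $\Stab(T^g)$ not in $\Hyp(T^g)$ must act on the axis as a reflection, so conjugation by it sends $g$ to an element translating along the same axis in the opposite direction, which is visibly a different isometry (it moves a point of the axis to a different place), hence a different group element, so $h \notin C(g)$. Both arguments are sound; yours is self-contained and purely geometric, so it would work verbatim for any group acting on a tree without invoking the structure of vertex stabilisers, while the paper's is shorter given that Lemma \ref{ellipticcentralizers} is already in hand. Your identification $\maxc{g} = \langle \hat{g} \rangle$ via primitivity of $\hat{g}$ in $\Hyp(T^g) \cong \mathbb{Z}^2$ spells out what the paper leaves implicit in the phrase ``the equality is by Lemma \ref{axissubgroup}, and the fact that $g$ acts directly.''
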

\begin{proof}
By compatibility $\iota_g$ acts like $g$, along the axis $T^g$. By Corollary \ref{transaction}, $\Fix(\iota_g) \leq \Stab(T^g)$.

Notice that no non-central elliptic element can be in $C(g)$, because if $g$ commuted with such an element it would be in that element's centraliser, but the centralisers of non-central elliptic elements contain only elliptic elements by Lemma \ref{ellipticcentralizers}.

Therefore $\Fix(\iota_g) \leq \Hyp(T^g) = \maxc{g} \times \langle x^p \rangle$, where the equality is by Lemma \ref{axissubgroup}, and the fact that $g$ acts directly. Since $\maxc{g}$ is an abelian subgroup containing $g$ and $\langle x^p \rangle$ is the centre, all elements in $\maxc{g} \times \langle x^p \rangle$ commute with $g$ and $\Fix(\iota_g) \geq \maxc{g} \times \langle x^p \rangle$, completing the proof.
\end{proof}

\section{Fixed Points of Automorphisms in $[\tau]$}

Recall $\tau: x \mapsto x^{-1}, y \mapsto y^{-1}$ is a non-inner automorphism. We now consider fixed points of automorphisms in the class $[\tau] \in \Out(G_{p,q})$. In the setting $p \neq q$ this is the only non-trivial outer automorphism class, so the results in this section will complete our understanding of fixed points in these cases.

An automorphism in this class can be written $\tau_g := \iota_g \circ \tau$. Notice $\tau$ is order 2 (in the automorphism group, not just the outer automorphism group).

We will split into two cases, based on whether $\tau_g$ is finite or infinite order, using the following characterisation.

\begin{lemma}\label{finiteordergtaug}
Take $\tau_g = \iota_g \circ \tau$ an automorphism as above. The following are equivalent:

\begin{enumerate}
\item $g\tau(g) = 1$,
\item $g\tau(g) \in Z(G_{p,q})$,
\item $\tau_g$ has order 2,
\item $\tau_g$ has finite order.
\end{enumerate}
\end{lemma}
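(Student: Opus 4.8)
The plan is to reduce the whole statement to a single computation of $\tau_g^2$ and then chase the implications around the cycle $(1)\Rightarrow(2)\Rightarrow(3)\Rightarrow(4)\Rightarrow(1)$, so that all four conditions collapse to being equivalent. The computation I would carry out first is that $\tau_g^2 = \iota_{g\tau(g)}$. This follows from the relation $\tau\iota_g = \iota_{\tau(g)}\tau$ in $\Aut(G_{p,q})$, which is immediate from $\tau(ghg^{-1}) = \tau(g)\tau(h)\tau(g)^{-1}$, together with $\tau^2 = \mathrm{id}$ and $\iota_a\iota_b = \iota_{ab}$: indeed $\tau_g^2 = \iota_g\tau\iota_g\tau = \iota_g\iota_{\tau(g)}\tau^2 = \iota_{g\tau(g)}$. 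Writing $w := g\tau(g)$, every implication then hinges on the identity $\tau_g^2 = \iota_w$.

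With this in hand, $(1)\Rightarrow(2)$ is trivial since $1 \in Z(G_{p,q})$. For $(2)\Rightarrow(3)$, if $w \in Z(G_{p,q})$ then $\iota_w = \mathrm{id}$, so $\tau_g^2 = \mathrm{id}$; moreover $\tau_g$ is not the identity because it is non-inner (its class in $\Out(G_{p,q})$ is $[\tau]\neq 1$), so its order is exactly $2$. The implication $(3)\Rightarrow(4)$ is immediate.

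The substance lies in $(4)\Rightarrow(1)$. Assuming $\tau_g$ has finite order, $\iota_w = \tau_g^2$ has finite order, and since $\iota_w^k = \iota_{w^k}$ this forces $w^k \in Z(G_{p,q}) = \langle x^p\rangle$ for some $k\geq 1$. I would then exploit $\tau$-equivariance in two ways. On the one hand $\tau$ inverts the centre (as $\tau(x^p)=x^{-p}$), so $\tau(w^k)=w^{-k}$. On the other hand $\tau(w) = \tau(g)g = g^{-1}wg$, whence $\tau(w^k)=g^{-1}w^k g = w^k$, because $w^k$ is central. Comparing the two expressions gives $w^{2k}=1$. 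Finally $G_{p,q}$ is torsion-free: any finite-order element is elliptic on the Bass-Serre tree (since $l(\gamma^n)=n\,l(\gamma)$ forces $l(\gamma)=0$) and hence lies in a vertex stabiliser, which is infinite cyclic by Lemma \ref{bassserretheory}. Therefore $w=1$, which is exactly $(1)$, closing the cycle.

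The main obstacle is precisely this last implication: finiteness of $\tau_g$ yields directly only that \emph{some} power of $w$ is central, not that $w$ itself is trivial, and the gap is bridged by combining the two computations of $\tau(w^k)$ with torsion-freeness. Along the way I would take care over the two otherwise harmless points the argument relies on, namely that $\tau_g$ is genuinely non-inner (needed in $(2)\Rightarrow(3)$ to rule out order $1$) and that $\iota_w$ having finite order is equivalent to a power of $w$ lying in $Z(G_{p,q})$.
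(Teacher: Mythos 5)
Your proof is correct, and its skeleton is the same as the paper's: the same cycle of implications $(1)\Rightarrow(2)\Rightarrow(3)\Rightarrow(4)\Rightarrow(1)$, hinging on the same computation $\tau_g^2 = \iota_{g\tau(g)}$. The genuine difference is in the only substantive step, $(4)\Rightarrow(1)$. The paper introduces the height homomorphism $ht\colon G_{p,q}\to\mathbb{Z}$, $x\mapsto q$, $y\mapsto p$, observes that $ht\bigl((g\tau(g))^k\bigr)=0$ while no non-trivial element of $Z(G_{p,q})=\langle x^p\rangle$ has height $0$, and concludes $(g\tau(g))^k=1$; you instead compute $\tau(w)$ for $w=g\tau(g)$ in two ways --- $\tau(w^k)=w^{-k}$ because $\tau$ inverts the centre, and $\tau(w^k)=g^{-1}w^kg=w^k$ because $\tau(w)=\tau(g)g$ is conjugate to $w$ and $w^k$ is central --- getting $w^{2k}=1$. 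Both routes then finish with torsion-freeness of $G_{p,q}$. Your argument is more self-contained (no auxiliary homomorphism, just $\tau$-equivariance and centrality), whereas the paper's choice is economical in context because the same height homomorphism is reused later (Lemmas \ref{algebraictau} and \ref{hyperbolicfixedpoints}). Two further points in your favour: in $(2)\Rightarrow(3)$ you explicitly rule out $\tau_g=\mathrm{id}$ by noting its outer class is $[\tau]\neq 1$, a point the paper silently elides, and you sketch a justification of torsion-freeness via ellipticity and cyclic vertex stabilisers, which the paper simply asserts.
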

\begin{proof}
$(1 \implies 2)$. If $g\tau(g) = 1$, then clearly $g\tau(g) \in Z(G_{p,q})$.

$(2 \implies 3)$. Suppose $g\tau(g) \in Z(G_{p,q})$. Then $\tau_g^2 = \iota_g \tau \iota_g \tau = \iota_{g\tau(g)} \tau^2 = id$, where the final equality is because $\tau$ is order 2 and conjugation by a central element is trivial. So $\tau_g$ has order 2.

$(3 \implies 4)$. If $\tau_g$ has order 2, then clearly it has finite order.

$(4 \implies 1)$. Finally suppose $\tau_g$ has finite order. Then so does $\tau_g^2 = \iota_{g\tau(g)}$. Say it has order $k$. Since $\iota_{g\tau(g)}^k = \iota_{(g\tau(g)^k}$ and conjugation by an element is the identity if and only if that element is in the centre, this means $(g\tau(g))^k \in \langle x^p \rangle$. However, consider the homomorphism $ht: G_{p,q} \rightarrow \mathbb{Z}$ given by $x \mapsto q$ and $y \mapsto p$. Notice $ht((g\tau(g))^k) = k ~ ht(g\tau(g)) = k(ht(g) - ht(g)) = 0$. On the other hand $ht(x^p) = qp \neq 0$, so $1$ is the only element of the centre in the kernel of $ht$. So $(g\tau(g))^k = 1$, but $G_{p,q}$ is torsion free so $g\tau(g) = 1$ as required.
\end{proof}

If $\tau_g$ is of infinite order we will calculate the fixed points algebraically by the following lemma. The finite order automorphisms will be dealt with geometrically.

\begin{lemma}\label{algebraictau}
If $\tau_g$ is of infinite order, then $\Fix(\tau_g)= \maxc{g\tau(g)} \cong \mathbb{Z}.$
\end{lemma}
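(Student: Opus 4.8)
The plan is to reduce the computation of $\Fix(\tau_g)$ to understanding a single involution of a copy of $\mathbb{Z}^2$. Write $c := g\tau(g)$. The starting observation is that $\tau_g^2 = \iota_g\tau\iota_g\tau = \iota_{g\tau(g)} = \iota_c$, so every fixed point of $\tau_g$ is a fixed point of $\tau_g^2 = \iota_c$; that is, $\Fix(\tau_g) \subseteq \Fix(\iota_c) = C(c)$. Thus it suffices to determine which elements of $C(c)$ are fixed by $\tau_g$, which amounts to studying the restriction of $\tau_g$ to $C(c)$.

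Next I would pin down $c$. Using the height homomorphism $ht \colon G_{p,q} \to \mathbb{Z}$, $x \mapsto q$, $y \mapsto p$ from the proof of Lemma~\ref{finiteordergtaug}, together with the fact that $ht \circ \tau = -ht$ (since $\tau$ inverts both generators), we get $ht(c) = ht(g) + ht(\tau(g)) = 0$. Because $\tau_g$ has infinite order, Lemma~\ref{finiteordergtaug} gives $c \neq 1$. Since the only central element of height $0$ is the identity, $c$ is non-central; and since a non-central elliptic element lies in a vertex stabiliser conjugate to $\langle x\rangle$ or $\langle y\rangle$ and hence has nonzero height, $c$ cannot be elliptic. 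Therefore $c$ is hyperbolic, $\maxc{c}$ is well defined by Lemma~\ref{maxc}, and $C(c) = \maxc{c}\times\langle x^p\rangle \cong \mathbb{Z}^2$ by Lemma~\ref{hyperboliccentralizers} (the centralizer is unchanged if $c$ is replaced by a central multiple acting directly).

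The heart of the argument is then to analyze $\alpha := \tau_g|_{C(c)}$. A direct computation using $\tau^2 = \mathrm{id}$ gives $\tau_g(c) = g\tau(g\tau(g))g^{-1} = g\tau(g)\,g\,g^{-1} = c$, so $\alpha$ is an automorphism of $C(c)$ fixing $c$; moreover $\alpha^2 = \iota_c|_{C(c)} = \mathrm{id}$ because $C(c)$ is abelian, so $\alpha$ is an involution of $C(c) \cong \mathbb{Z}^2$. Since $\tau_g(x^p) = gx^{-p}g^{-1} = x^{-p} \neq x^p$, the involution $\alpha$ is non-trivial, so (being diagonalisable over $\mathbb{Q}$ with eigenvalues in $\{1,-1\}$ but not all $1$) its fixed subgroup has rank at most $1$; and it contains $c \neq 1$, so the rank is exactly $1$.

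Finally I would upgrade ``rank $1$'' to ``$= \maxc{c}$'' via a purity observation: the fixed subgroup $\ker(\alpha - \mathrm{id})$ of any involution of a free abelian group is primitive, since if $kv$ is fixed for some $k \neq 0$ then $k(\alpha v - v) = 0$ forces $\alpha v = v$. Hence $\Fix(\alpha)$ is a maximal cyclic subgroup containing $c$, and by the uniqueness in Lemma~\ref{maxc} it equals $\maxc{c}$. Combining with the first step, $\Fix(\tau_g) = \Fix(\alpha) = \maxc{g\tau(g)} \cong \mathbb{Z}$. I expect the main obstacle to be the two structural facts that are easy to overlook: first, that $c$ is necessarily hyperbolic (needing the height argument to exclude the elliptic case, which would otherwise alter the shape of $C(c)$), and second, the purity step, which is precisely what forces the answer to be the full maximal cyclic subgroup $\maxc{c}$ rather than a proper finite-index subgroup of it.
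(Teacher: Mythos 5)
Your argument is, at its core, the same as the paper's: both reduce to $\Fix(\tau_g)\subseteq\Fix(\tau_g^2)=C(c)$ where $c=g\tau(g)$, both show $c$ is hyperbolic so that $C(c)\cong\mathbb{Z}^2$, both use $\tau_g(x^p)=x^{-p}$ to bound the rank of the fixed subgroup by one, and your purity step is the same unique-roots observation the paper uses to prove $\maxc{g\tau(g)}\leq\Fix(\tau_g)$, just run in the opposite direction. The genuine variations are minor: your height-homomorphism proof that $c$ is hyperbolic replaces the paper's argument that no nonzero power of $c$ is central (elliptic elements always have central powers), and you phrase the rank bound as linear algebra for an involution of $\mathbb{Z}^2$ rather than via the observation that a rank-two subgroup of $\Fix(\tau_g^2)$ would have to meet the centre.

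One assertion you make is unjustified, and it is exactly the pitfall the paper flags. You claim $C(c)=\maxc{c}\times\langle x^p\rangle$, justifying it by saying the centralizer is unchanged if $c$ is replaced by a central multiple $c'=c(x^p)^k$ acting directly. The centralizer is indeed unchanged, but what Lemma \ref{hyperboliccentralizers} then gives is $C(c)=\maxc{c'}\times\langle x^p\rangle$, and $\maxc{c'}$ need not equal $\maxc{c}$: the equality $C(c)=\maxc{c}\times\langle x^p\rangle$ holds precisely when $c$ itself acts directly, which the paper explicitly notes cannot be assumed for $g\tau(g)$ (``Notice that we cannot assume $g\tau(g)$ itself acted directly''). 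The paper's Example in $G_{4,4}$ makes the failure concrete for a general hyperbolic element: $c=(x^2y^2)^2x^4$ is hyperbolic and not a proper power, so $\maxc{c}=\langle c\rangle$, yet $\langle c\rangle\times\langle x^4\rangle$ has index $2$ in $C(c)=\langle x^2y^2\rangle\times\langle x^4\rangle$. Fortunately the claim is never used downstream: the involution argument, the rank bound, and the purity step need only that $C(c)$ is free abelian of rank $2$ and contains $c$ and $x^p$, and your final identification $\Fix(\alpha)=\maxc{c}$ comes from purity together with Lemma \ref{maxc}, not from the product decomposition. Deleting that one clause (or restating it with $\maxc{c'}$ in place of $\maxc{c}$) leaves a correct proof.
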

\begin{proof}
Notice first that, by Lemma \ref{finiteordergtaug}, $g\tau(g) \notin Z(G_{p,q})$. Therefore $g\tau(g)$ is non-trivial; by Lemma \ref{maxc} the maximal cyclic subgroup containing $g\tau(g)$, $\maxc{g\tau(g)}$, is well defined; and for $k \in \mathbb{Z}$, if $g\tau(g)$ has a $k$th root, that is $h$ such that $h^k = g\tau(g)$, then it is unique. For the final claim notice that any $k$th root is in $\maxc{g\tau(g)}$, which is infinite cyclic so has unique $k$th roots.

First we show $\maxc{g\tau(g)} \leq \Fix(\tau_g)$. Notice that $$\tau_g(g \tau(g)) = \iota_g \circ \tau (g \tau(g)) = g \tau(g) g g^{-1} = g \tau (g).$$ Since $g\tau(g)$ is a fixed point of $\tau_g$ then all of its powers certainly are as well. Furthermore, if $h^k = g\tau(g)$ for some $k \in \mathbb{Z}$ then $g\tau(g) = \tau_g(h^k) = \tau_g(h)^k$, but by uniqueness of $k$th roots, $\tau_g(h) = h$. This completes the proof of this inclusion.

It remains to show $\Fix(\tau_g) \leq \maxc{g\tau(g)}$. A fixed point of an automorphism is always a fixed point of any power of that automorphism, so $\Fix(\tau_g) \leq \Fix(\tau_g^2)$. Now note $\tau_g^2 = \iota_{g\tau(g)}$, and in particular is an inner automorphism. Since no non-zero power of $g\tau(g)$ is central (if it were, $\tau_g$ would have finite order), it must be that $g\tau(g)$ acts hyperbolically. Thus using our understanding of centralizers from Lemma \ref{hyperboliccentralizers}, we see $\Fix(\tau_g^2) = \langle \hat{g} \rangle \times \langle x^p \rangle \cong \mathbb{Z}^2$, where $\hat{g}$ acts directly and is not a proper power. Notice that we cannot assume $g\tau(g)$ itself acted directly.

Since $\tau_g(x^p) = gx^{-p}g^{-1} = x^{-p}$, we see $\tau_g$ can't fix anything in the centre. So $\Fix(\tau_g)$ must be a subgroup of rank at most 1 in $\Fix(\tau_g^2)$, that is trivial or cyclic. By the above $\maxc{g\tau(g)} \leq \Fix(\tau_g)$, but since $\maxc{g\tau(g)}$ is a maximal cyclic subgroup, $\Fix(\tau_g) = \maxc{g\tau(g)}$ as required.

\end{proof}

We are only left to address $\tau_g$ of finite order. Our approach will be to establish $T^{\tau_g}$ (that is the subtree fixed pointwise by the action of $\tau_g$) in these cases, and then apply Corollary \ref{fixaction} to restrict our attention to the setwise stabiliser $\Stab(T^{\tau_g})$.

\begin{lemma}\label{trueformlemma}
If $\tau_g$ is of finite order, then there exists $h \in G$ and $r \in \mathbb{Z}$ such that $g = h x^r \tau(h)^{-1}$ or $g = h y^r \tau(h)^{-1}$. In particular, $\tau_g = \iota_h \iota_{x^r} \tau \iota_h^{-1}$ or $\tau_g = \iota_h \iota_{y^r} \tau \iota_h^{-1}$.
\end{lemma}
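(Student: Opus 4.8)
The plan is to use the compatible action on the Bass--Serre tree $T$ to locate a \emph{vertex} fixed by $\tau_g$, and then read off the form of $g$ from a coset identity. The starting observation is that $\tau_g$ has finite order, so the isometry it induces on $T$ is elliptic: by the remark after Proposition \ref{geometricfixsubtree}, $l(\tau_g^n) = n\, l(\tau_g)$, so any hyperbolic isometry has infinite order. Hence $l(\tau_g) = 0$ and $\tau_g$ fixes a point of $T$.

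First I would upgrade this fixed point to a fixed vertex. A priori an elliptic isometry of $T$ could fix only the midpoint of an edge while inverting it, and the excerpt warns that $\Aut(G_{p,q})$ can act with inversions. To exclude this for $\tau_g$, I would observe that $\tau_g$ preserves the bipartition of the vertices of $T$ into $\langle x \rangle$-cosets and $\langle y \rangle$-cosets. The group action preserves vertex type, and by Lemma \ref{compatibilitylemma} together with the fact that $\tau$ fixes $e_{\langle x^p \rangle}$ pointwise (hence fixes both endpoints $v_{\langle x \rangle}$ and $v_{\langle y \rangle}$), one checks $\tau \cdot (h v_{\langle x \rangle}) = \tau(h) v_{\langle x \rangle}$, so $\tau$, and therefore $\tau_g = \iota_g \circ \tau$, sends $x$-type vertices to $x$-type vertices and $y$-type to $y$-type. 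Since the two endpoints of any edge have different types, $\tau_g$ cannot swap them, so it cannot invert an edge; consequently its nonempty fixed-point set must contain a vertex $v$.

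Next I would compute. Writing $v = h v_{\langle x \rangle}$ for some $h \in G$ (the $y$-type case being identical), Lemma \ref{compatibilitylemma} gives $\tau_g \cdot (h v_{\langle x \rangle}) = \tau_g(h)\,(\tau_g \cdot v_{\langle x \rangle})$, and since $\tau$ fixes $v_{\langle x \rangle}$ we have $\tau_g \cdot v_{\langle x \rangle} = \iota_g \cdot v_{\langle x \rangle} = g v_{\langle x \rangle}$. Using $\tau_g(h) = g\tau(h)g^{-1}$ this simplifies to $\tau_g \cdot (h v_{\langle x \rangle}) = g\tau(h) v_{\langle x \rangle}$. The condition $\tau_g \cdot v = v$ then reads $g\tau(h)\langle x \rangle = h \langle x \rangle$, i.e. $h^{-1} g \tau(h) = x^r$ for some $r \in \mathbb{Z}$, which rearranges to $g = h x^r \tau(h)^{-1}$; the $y$-type vertex yields $g = h y^r \tau(h)^{-1}$.

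Finally, the displayed consequence $\tau_g = \iota_h \iota_{x^r} \tau \iota_h^{-1}$ is a direct verification in $\Aut(G_{p,q})$: from $g = h x^r \tau(h)^{-1}$ and the identity $\tau \circ \iota_{h^{-1}} = \iota_{\tau(h)^{-1}} \circ \tau$ one gets $\iota_h \iota_{x^r} \tau \iota_h^{-1} = \iota_h \iota_{x^r} \iota_{\tau(h)^{-1}} \tau = \iota_{h x^r \tau(h)^{-1}} \tau = \iota_g \tau = \tau_g$, and symmetrically in the $y$-case. I expect the main obstacle to be the second step, namely ensuring that the elliptic isometry $\tau_g$ genuinely fixes a vertex rather than merely inverting an edge; this is exactly what the type-preservation argument secures. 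Once a fixed vertex is in hand, the remaining coset computation is routine.
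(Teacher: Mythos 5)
Your proposal is correct, and its second half (the coset computation at a fixed vertex, and the closing identity in $\Aut(G_{p,q})$) coincides with the paper's argument; the genuine difference is in how you produce the fixed vertex. The paper first invokes Lemma \ref{finiteordergtaug} to get $g\tau(g)=1$, deduces that $\tau_g$ swaps $v_{\langle x \rangle}$ and $gv_{\langle x \rangle}$, and takes the midpoint of the geodesic between them; this midpoint is a vertex because both endpoints are $\langle x \rangle$-cosets, hence at even distance in the bipartite tree. You instead argue abstractly: finite order forces the induced isometry to be elliptic (via $l(\gamma^n)=n\,l(\gamma)$), and your type-preservation argument --- $\tau$ and each $\iota_g$ send $\langle x \rangle$-type vertices to $\langle x \rangle$-type vertices, while inverting an edge would require exchanging its two endpoints, which have different types --- rules out the only way an elliptic isometry can avoid fixing a vertex. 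Both proofs ultimately exploit the same bipartite structure of $T$, but your route bypasses Lemma \ref{finiteordergtaug} entirely (you never need $g\tau(g)=1$, only that finite-order isometries are elliptic), which makes the existence step marginally more self-contained; the paper's construction, by contrast, is explicit about where the fixed vertex sits (halfway between $v_{\langle x \rangle}$ and $gv_{\langle x \rangle}$), though nothing later in the paper uses that extra precision. It is also worth noting that your no-inversion device is essentially the one the paper deploys in the proof of Lemma \ref{finiteordergsigmag}, where the opposite conclusion is drawn for $\sigma$-type automorphisms (they swap types, so must invert an edge); so your argument fits naturally alongside the paper's methods.
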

\begin{proof}
Notice that $\tau_g \cdot v_{\langle x \rangle} = \iota_g \tau \cdot v_{\langle x \rangle} = g (\tau \cdot v_{\langle x \rangle}) = gv_{\langle x \rangle}$. Furthermore $\tau_g \cdot gv_{\langle x \rangle} = g \tau(g) v_{\langle x \rangle} = v_{\langle x \rangle}$, where the first equality is using Corollary \ref{phiaction}, and the last equality holds because $g\tau(g) = 1$ by Lemma \ref{finiteordergtaug}. Since $\tau_g$ interchanges $v_{\langle x \rangle}$ and $g v_{\langle x \rangle}$, it fixes the midpoint of the geodesic between them. The length of the path is even (because the tree is alternating between vertices corresponding to cosets of $\langle x \rangle$ and $\langle y \rangle$), so this midpoint is a vertex, without loss of generality suppose it corresponds to a coset of $\langle y \rangle$ and call it $hv_{\langle y \rangle}$.

We compute $h v_{\langle y \rangle} = \tau_g h v_{\langle y \rangle} = g \tau(h) v_{\langle y \rangle}$, where the first equality is by assumption the vertex is fixed, and the second uses compatibility and Lemma \ref{phiaction}. So $h^{-1}g\tau(h) \in G_{v_{\langle y \rangle}} = \langle y \rangle$. Rearranging we see $g = hy^r\tau(h)^{-1}$, for some $r \in \mathbb{Z}$, as required. If $g$ is of this form then $\tau_g = \iota_g \tau = \iota_h \iota_{y^r} \iota_{\tau(h)^{-1}} \tau = \iota_h \iota_{y^r} \tau \iota_h^{-1}$.

Likewise, if the midpoint of the geodesic considered about was $hv_{\langle x \rangle}$, then, for some $r$, $\tau_g = \iota_h \iota_{x^r} \tau \iota_h^{-1}$.
\end{proof}

With Lemma \ref{trueformlemma} in mind, we will first consider automorphisms of the form $\tau_{x^r}$ and $\tau_{y^r}$, for $r \in \mathbb{Z}$, because generic $\tau_g$ of finite order are conjugate to these automorphisms by inner automorphisms, which act like group elements by compatibility. Thus $T^{\tau_g}$ will just be translates of $T^{\tau_{x^r}}$ and $T^{\tau_{y^r}}$.

The following lemmas are the key ingredients in determining $T^{\tau_{x^r}}$, which will be done in Proposition \ref{xtaustabset}. The proof will be by induction with the Lemma \ref{stabfundamentaldomain} being the base case and Lemma \ref{stabsetextensionlemma} being the inductive case. In the following, it will be useful to have in mind the informal point of view in Figure \ref{diagramOfAction} (Section \ref{sectionautgroup}), that $\tau$ is a reflection over an axis extending the edge $e_{\langle x^p \rangle}$.

\begin{lemma}\label{stabfundamentaldomain}
For $r \in \mathbb{Z}$ the automorphism $\tau_{x^r}$ fixes $v_{\langle x \rangle}$. Furthermore, for $k \in \mathbb{Z}$, $\tau_{x^r}$ fixes the edge $x^k e_{\langle x^p \rangle}$ if and only if $r = 2k$ (mod $p$).
\end{lemma}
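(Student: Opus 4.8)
The plan is to compute the action of $\tau_{x^r}$ on the relevant simplices explicitly, exploiting compatibility to reduce everything to two facts established earlier: that $\tau$ fixes $e_{\langle x^p \rangle}$ pointwise, and that inner automorphisms act as the corresponding group elements (Definition \ref{compatibleaction}). Throughout I would use Lemma \ref{compatibilitylemma} in the form $\varphi \cdot gz = \varphi(g)(\varphi \cdot z)$, noting that its proof uses only the compatibility relation and the automorphism-group identity $\varphi \iota_g = \iota_{\varphi(g)}\varphi$, and hence applies to any point $z$ of the tree, edges (or their midpoints) included.

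For the first claim, since $\tau \cdot v_{\langle x \rangle} = v_{\langle x \rangle}$ (its stabiliser $\langle x \rangle$ is $\tau$-invariant, so this follows from Lemma \ref{actionofautgpq}) and $\iota_{x^r}$ acts like $x^r$ by compatibility, I would compute
\[
\tau_{x^r} \cdot v_{\langle x \rangle} = \iota_{x^r} \cdot (\tau \cdot v_{\langle x \rangle}) = x^r v_{\langle x \rangle} = v_{x^r \langle x \rangle} = v_{\langle x \rangle},
\]
the last equality because $x^r \in \langle x \rangle$. This settles that $v_{\langle x \rangle}$ is fixed.

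For the second claim, I would run the same computation on the edge $x^k e_{\langle x^p \rangle}$. Using $\tau(x^k) = x^{-k}$ together with $\tau_{x^r} \cdot e_{\langle x^p \rangle} = \iota_{x^r}\cdot(\tau \cdot e_{\langle x^p \rangle}) = x^r e_{\langle x^p \rangle}$ (since $\tau$ fixes $e_{\langle x^p \rangle}$), Lemma \ref{compatibilitylemma} gives
\[
\tau_{x^r} \cdot (x^k e_{\langle x^p \rangle}) = \tau_{x^r}(x^k)\,(\tau_{x^r} \cdot e_{\langle x^p \rangle}) = x^{-k}\bigl(x^r e_{\langle x^p \rangle}\bigr) = x^{r-k} e_{\langle x^p \rangle}.
\]
Since edges correspond bijectively to cosets of $\langle x^p \rangle$, this image equals $x^k e_{\langle x^p \rangle}$ exactly when $x^{r-k}\langle x^p \rangle = x^k \langle x^p \rangle$, i.e. when $x^{2k-r} \in \langle x^p \rangle$, i.e. when $r \equiv 2k \pmod p$, which is the stated condition.

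The one point requiring care — and essentially the only place the argument could go wrong — is the meaning of ``fixes the edge'': I must rule out that $\tau_{x^r}$ sends the edge to itself while inverting it, since the action of $\Aut(G_{p,q})$ can have inversions in general. Here this cannot occur, because $\tau$ preserves each of $\langle x \rangle$ and $\langle y \rangle$ and $\iota_{x^r}$ does as well, so $\tau_{x^r}$ carries $\langle x \rangle$-coset vertices to $\langle x \rangle$-coset vertices and $\langle y \rangle$-coset vertices to $\langle y \rangle$-coset vertices; it therefore never exchanges the two endpoints of an edge. Consequently setwise fixing coincides with pointwise fixing, and the coset computation above yields precisely the claimed equivalence.
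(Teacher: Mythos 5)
Your proof is correct and takes essentially the same approach as the paper: both establish that $v_{\langle x \rangle}$ is fixed via invariance of $\langle x \rangle$, then use compatibility (that $\tau$ fixes the fundamental domain, together with Lemma \ref{compatibilitylemma}/Corollary \ref{phiaction}) to compute the image of the translated simplex as $x^{r-k}$ times the original and compare cosets; the paper simply runs this computation on the endpoint $x^k v_{\langle y \rangle}$ rather than on the edge itself, which also disposes of inversions for free, since an inverted edge would have to move the already-fixed endpoint $v_{\langle x \rangle}$. One small slip in your inversion argument: $\iota_{x^r}$ does \emph{not} preserve $\langle y \rangle$ as a subgroup (it conjugates it to $x^r \langle y \rangle x^{-r}$); the correct justification is that $\iota_{x^r}$ acts on $T$ as the group element $x^r$ by compatibility, and the $G$-action preserves the bipartition of vertices into $\langle x \rangle$-cosets and $\langle y \rangle$-cosets, so your conclusion that $\tau_{x^r}$ never inverts an edge still stands.
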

\begin{proof}
Clearly $\tau_{x^r} = \iota_{x^r} \tau$ fixes $v_{\langle x \rangle}$, as each $\tau$ and $\iota_{x^r}$ preserve $\langle x \rangle$ as a set. Notice this is one endpoint of $x^k e_{\langle x^p \rangle}$.

Since $\tau$ fixes $v_{\langle y \rangle}$, by Corollary \ref{phiaction}, $\tau_{x^r} \cdot x^{k} v_{\langle y \rangle} = \iota_{x^r} \tau \cdot x^{k} v_{\langle y \rangle} = x^{r-k} v_{\langle y \rangle}$. So the edge $x^k e_{\langle x^p \rangle}$ is fixed by $\iota_{x^r} \tau$ if and only if $x^{k} v_{\langle y \rangle} = x^{r-k} v_{\langle y \rangle}$. Since $x^p = y^q$ is the minimal positive power of $x$ in $\langle y \rangle$, this happens if and only if $r-k = k$ (mod $p$), that is $r = 2k$ (mod $p$).
\end{proof}

Lemma \ref{stabsetextensionlemma}, which will be used inductively in Proposition \ref{xtaustabset}, takes an edge $x^{k}he_{\langle x^p \rangle}$, assumed to be in $T^{\tau_{x^r}}$, and then characterises which of the edges sharing an endpoint with $x^{k}he_{\langle x^p \rangle}$ are also in $T^{\tau_{x^r}}$.

\begin{lemma}\label{stabsetextensionlemma}
Let $r = 2k$ (mod $p$) and suppose $x^{k}he_{\langle x^p \rangle}$ is an edge fixed by $\tau_{x^r}$, for some $h\in G_{p,q}$.  Then, for each $i \in \mathbb{Z}$, the edge $x^{k}hy^ie_{\langle x^p \rangle}$ is fixed by $\tau_{x^r}$ if and only if $i$ is an integer multiple of $\frac{q}{2}$.

Likewise the edge $x^khx^ie_{\langle x^p \rangle}$ is fixed by $\tau_{x^r}$ if and only if $i$ is an integer multiple of $\frac{p}{2}$.
\end{lemma}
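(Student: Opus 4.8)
The plan is to reduce everything to a question about fixed endpoints. Since $\tau_{x^r}$ has finite order it is elliptic, so an edge lies in $T^{\tau_{x^r}}$ precisely when both of its endpoints are fixed. The two families in the statement are exactly the edges incident to the two endpoints of the given fixed edge $x^k h e_{\langle x^p \rangle}$: every edge $x^k h y^i e_{\langle x^p \rangle}$ shares the endpoint $x^k h v_{\langle y \rangle}$, and every edge $x^k h x^i e_{\langle x^p \rangle}$ shares the endpoint $x^k h v_{\langle x \rangle}$. As both of these endpoints are already fixed (because the given edge is fixed), the edge $x^k h y^i e_{\langle x^p \rangle}$ is fixed if and only if its other endpoint $x^k h y^i v_{\langle x \rangle}$ is fixed, and symmetrically the edge $x^k h x^i e_{\langle x^p \rangle}$ is fixed if and only if $x^k h x^i v_{\langle y \rangle}$ is fixed.

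Next I would compute the action of $\tau_{x^r}$ on these outer endpoints using compatibility. Recall $\tau_{x^r} \cdot v_{\langle x \rangle} = v_{\langle x \rangle}$ by Lemma \ref{stabfundamentaldomain}, while $\tau_{x^r} \cdot v_{\langle y \rangle} = x^r v_{\langle y \rangle}$ since $\tau$ fixes $v_{\langle y \rangle}$ and $\iota_{x^r}$ acts as $x^r$. Applying Lemma \ref{compatibilitylemma} together with $\tau(y) = y^{-1}$ and $\tau(x) = x^{-1}$, one finds $\tau_{x^r} \cdot (x^k h y^i v_{\langle x \rangle}) = x^r \tau(x^k h) y^{-i} v_{\langle x \rangle}$ and, likewise, $\tau_{x^r} \cdot (x^k h x^i v_{\langle y \rangle}) = x^r \tau(x^k h) x^{-i} v_{\langle y \rangle}$ (absorbing the trailing $x^{-r}$ into the fixed vertex $v_{\langle x \rangle}$ in the first case).

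The crucial step is to pin down the element $x^r \tau(x^k h)$ using the hypothesis that $x^k h e_{\langle x^p \rangle}$ is fixed. Setting $i = 0$ in the two displays above and using that \emph{both} endpoints of the given edge are fixed yields $(x^k h)^{-1} x^r \tau(x^k h) \in \langle x \rangle$ and $(x^k h)^{-1} x^r \tau(x^k h) \in \langle y \rangle$; since in the amalgam $\langle x \rangle \cap \langle y \rangle = \langle x^p \rangle$ is the centre, this forces $x^r \tau(x^k h) = x^k h\, z$ for some central $z \in \langle x^p \rangle$. Substituting and pushing $z$ past the cyclic powers, the action simplifies to $\tau_{x^r} \cdot (x^k h y^i v_{\langle x \rangle}) = x^k h y^{-i} v_{\langle x \rangle}$ and $\tau_{x^r} \cdot (x^k h x^i v_{\langle y \rangle}) = x^k h x^{-i} v_{\langle y \rangle}$. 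Thus the first endpoint is fixed iff $y^{-i} v_{\langle x \rangle} = y^i v_{\langle x \rangle}$, i.e. $y^{-2i} \in \langle x \rangle \cap \langle y \rangle = \langle y^q \rangle$, which holds iff $q \mid 2i$, equivalently iff $i$ is an integer multiple of $\tfrac{q}{2}$ (covering both parities of $q$). The $x^i$ case is identical, with the condition $x^{-2i} \in \langle y \rangle \cap \langle x \rangle = \langle x^p \rangle$ giving $p \mid 2i$.

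The main obstacle is the centrality step. Fixing a single endpoint of $x^k h e_{\langle x^p \rangle}$ does not determine $x^r \tau(x^k h)$, since the endpoint $x^k h v_{\langle x \rangle}$ is fixed for any correction term of the form $x^s$; the resulting condition on the neighbours would then be a messy conjugation rather than a clean divisibility. It is only by using that the whole edge, hence both endpoints, is fixed that the correction term is forced into the centre, which is exactly what makes $\tau_{x^r}$ act as the reflection $y^i \mapsto y^{-i}$ and $x^i \mapsto x^{-i}$ on the stars of the two endpoints. Everything else is a routine translation between coset equalities in the tree and divisibility conditions, using the amalgam intersection $\langle x \rangle \cap \langle y \rangle = \langle x^p \rangle$.
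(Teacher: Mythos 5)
Your proof is correct and follows essentially the same route as the paper's: reduce each neighbouring edge to the question of whether its outer endpoint is fixed, use the hypothesis that $x^k h e_{\langle x^p \rangle}$ is fixed to force the correction term $x^r\tau(x^k h)(x^kh)^{-1}$ into the centre, and then translate the resulting coset equality into the divisibility condition. The only cosmetic difference is that you derive centrality from both endpoints being fixed via $\langle x \rangle \cap \langle y \rangle = \langle x^p \rangle$, while the paper reads it off from the edge stabiliser $\Stab(e_{\langle x^p \rangle}) = Z(G_{p,q})$ --- the same fact, since the edge stabiliser is exactly that intersection.
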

\begin{proof}
One endpoint of the edge $x^{k}hy^ie_{\langle x^p \rangle}$ is the vertex $x^{k}hy^iv_{\langle y \rangle} = x^{k}hv_{\langle y \rangle}$, which is in $x^{k}he_{\langle x^p \rangle}$ so is fixed by $\tau_{x^r}$ by assumption.

So $x^{k}hy^ie_{\langle x^p \rangle}$ is fixed if and only if its other endpoint, $x^{k}hy^iv_{\langle x \rangle}$, is.

We claim that $h^{-1}\tau(h) \in Z(G_{p,q})$. To this end we calculate, \begin{align*}
x^{k}he_{\langle x^p \rangle} &= \tau_{x^r} \cdot x^{k}he_{\langle x^p \rangle}\\
&= \iota_{x^r} \tau \cdot x^{k}he_{\langle x^p \rangle}\\
&= \iota_{x^r} \cdot x^{-k}\tau(h)e_{\langle x^p \rangle}\\
&= x^{r-k} \tau(h) e_{\langle x^p \rangle}\\
&= x^{k+ap} \tau(h) e_{\langle x^p \rangle}\\
&= x^k\tau(h) e_{\langle x^p \rangle},\\
\end{align*}
where the first equality is by assumption, and using Corollary \ref{phiaction} for the third equality, the fourth equality is by compatibility, the fifth holds for some $a$ since $r = 2k$ (mod $p$), and the final equality is because $x^{ap}$ is in the centre and edge stabilizer. This means $h^{-1}\tau(h)$ is in the stabilizer of $e_{\langle x^p \rangle}$, which is $Z(G_{p,q})$.

So considering the other endpoint of $x^{k}hy^ie_{\langle x^p \rangle}$, that is the vertex $x^{k}hy^iv_{\langle x \rangle}$, we see, \begin{align*}
\tau_{x^r} \cdot x^{k}hy^iv_{\langle x \rangle} &= x^r x^{-k} \tau(h) y^{-i} v_{\langle x \rangle}\\
&= x^{k+ap} h y^{-i} v_{\langle x \rangle}\\
&= x^{k} h y^{-i} v_{\langle x \rangle},\\
\end{align*} where the first equality is by Corollary \ref{phiaction}, the second is because $r=2k$ (mod $p$), and the third is since $x^p$ is in the centre. The vertex $x^{k}hy^iv_{\langle x \rangle}$, and thus the edge $x^{k}hy^ie_{\langle x^p \rangle}$, is fixed if and only if $(y^{-i})^{-1}y^{i} = y^{2i}  \in \langle x \rangle$. Since $\langle y \rangle \cap \langle x \rangle = \langle y^q \rangle$, this occurs if and only if $y^{2i}$ is a power of $y^q = x^p$, which occurs if and only if $i$ is an integer multiple of $\frac{q}{2}$.

The second part of the conclusion follows by symmetrical argument.
\end{proof}

With the previous two lemmas, we are now ready to determine the subtree $T^{\tau_{x^r}}$ exactly.

\begin{proposition}\label{xtaustabset}

The subtree $T^{\tau_{x^r}}$ fixed pointwise by $\tau_{x^r}$ is as follows.

\begin{enumerate}

\item Suppose there is $k \in \mathbb{Z}$ such that $r = 2k$ (mod $p$). 
\begin{enumerate}
\item If $p$ and $q$ are both odd then $T^{\tau_{x^r}}$ is an edge.
\item If exactly one of $p$ and $q$ is odd then $T^{\tau_{x^r}}$ is 2 edges.
\item If $p$ and $q$ are both even then $T^{\tau_{x^r}}$ is a bi-infinite line.
\end{enumerate} 

\item If there is no $k \in \mathbb{Z}$ such that $r = 2k$ (mod $p$), then $T^{\tau_{x^r}} = v_{\langle x \rangle}$ is a single point.

\end{enumerate}
\end{proposition}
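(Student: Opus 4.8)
The plan is to determine $T^{\tau_{x^r}}$ by growing it outward from a single fixed edge, controlling the local degree at each vertex via the parities of $p$ and $q$. First I would record that $\tau_{x^r}$ fixes the vertex $v_{\langle x \rangle}$ by Lemma \ref{stabfundamentaldomain}, so it is elliptic (translation length $0$); moreover an isometry fixing a vertex cannot invert any edge, since the geodesic from the fixed vertex to the midpoint of a putatively inverted edge would have to be fixed pointwise, forcing an endpoint of that edge to be fixed, a contradiction. Hence $T^{\tau_{x^r}}$ coincides with the set of genuinely fixed points, which by Proposition \ref{geometricfixsubtree} is a nonempty connected subtree. For part 2, when no $k$ satisfies $r = 2k \pmod p$, Lemma \ref{stabfundamentaldomain} shows that none of the edges $x^k e_{\langle x^p \rangle}$ (for $0 \le k < p$) adjacent to $v_{\langle x \rangle}$ is fixed; since these are all the edges at $v_{\langle x \rangle}$ and $T^{\tau_{x^r}}$ is a connected subtree containing $v_{\langle x \rangle}$, it must be the single vertex $v_{\langle x \rangle}$.

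For part 1, I would fix some $k$ with $r = 2k \pmod p$; then $x^k e_{\langle x^p \rangle}$ is a fixed edge by Lemma \ref{stabfundamentaldomain}, and this is the base of the induction. The inductive engine is Lemma \ref{stabsetextensionlemma}: given a fixed edge written as $x^k h e_{\langle x^p \rangle}$ (so that the standing hypothesis $r = 2k \pmod p$ holds), every fixed edge adjacent to it is again of the same form $x^k h' e_{\langle x^p \rangle}$ with the \emph{same} $k$ — namely $h' = h y^i$ at the $\langle y \rangle$-endpoint or $h' = h x^i$ at the $\langle x \rangle$-endpoint — so the hypothesis is preserved and the induction propagates through all of the connected subtree $T^{\tau_{x^r}}$. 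The crucial count comes from reading Lemma \ref{stabsetextensionlemma} modulo $p$ and $q$: at an $\langle x \rangle$-type vertex the fixed adjacent edges are indexed by $i$ an integer multiple of $\frac{p}{2}$, which yields the residues $\{0, \frac{p}{2}\} \pmod p$ (two edges) when $p$ is even but only $\{0\} \pmod p$ (a single edge, the one already entered) when $p$ is odd; symmetrically a $\langle y \rangle$-type vertex carries two fixed adjacent edges when $q$ is even and one when $q$ is odd.

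With the degree of every vertex of $T^{\tau_{x^r}}$ pinned down, the three subcases read off directly. If $p$ and $q$ are both odd, every vertex has degree $1$, so the subtree is exactly the edge $x^k e_{\langle x^p \rangle}$, giving (a). If exactly one of $p, q$ is even, say $p$, then every $\langle x \rangle$-vertex has degree $2$ and every $\langle y \rangle$-vertex degree $1$; starting from $x^k e_{\langle x^p \rangle}$ the subtree extends once at the $\langle x \rangle$-endpoint $v_{\langle x \rangle}$ (to the edge $x^{k+p/2} e_{\langle x^p \rangle}$) and terminates at both $\langle y \rangle$-endpoints, so $T^{\tau_{x^r}}$ is the union of two edges meeting at $v_{\langle x \rangle}$, giving (b) (the case $q$ even, $p$ odd is symmetric, with the two edges meeting at a $\langle y \rangle$-vertex). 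If $p$ and $q$ are both even, every vertex has degree exactly $2$; a connected subtree of a tree in which every vertex has degree $2$ is neither a finite path (which would have degree-$1$ endpoints) nor a cycle (trees are acyclic), so it is a bi-infinite line, giving (c).

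The bulk of the analytic work is already discharged by Lemmas \ref{stabfundamentaldomain} and \ref{stabsetextensionlemma}, so the remaining obstacle is combinatorial bookkeeping rather than fresh computation. The points to get right are: (i) converting the condition ``$i$ is an integer multiple of $\frac{p}{2}$'' into the correct edge count according to the parity of $p$ (and likewise for $q$), which is exactly what separates the three subcases; (ii) verifying that the index $k$ is unchanged as the induction propagates, so that the hypothesis $r = 2k \pmod p$ of Lemma \ref{stabsetextensionlemma} continues to hold at every stage; and (iii) justifying that uniform degree $2$ forces a bi-infinite line. None of these is deep, but (i) is where the parity dichotomy genuinely enters, so it is the step I would write most carefully.
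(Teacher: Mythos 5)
Your proposal is correct and follows essentially the same route as the paper's proof: Lemma \ref{stabfundamentaldomain} as the base case, Lemma \ref{stabsetextensionlemma} as the inductive step propagated through the connected subtree $T^{\tau_{x^r}}$ (Proposition \ref{geometricfixsubtree}), and a parity-based count of the fixed edges at each vertex to separate the three subcases. Your additional observation that the fixed vertex $v_{\langle x \rangle}$ rules out edge inversions, so that $T^{\tau_{x^r}}$ is a genuine simplicial subtree, is a point the paper leaves implicit and is a welcome touch of extra rigour, but it does not change the argument.
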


\begin{proof}
If there is no $k \in \mathbb{Z}$ such that $r = 2k$ (mod $p$), then the result is given by Lemma \ref{stabfundamentaldomain} since $v_{\langle x \rangle}$ is fixed and none of the neighbouring edges are, and $T^{\tau_{x^r}}$ is connected by Proposition \ref{geometricfixsubtree}.

So suppose $r=2k$ (mod $p$). By Lemma \ref{stabfundamentaldomain}, $x^{k} e_{\langle x^p \rangle}$ is an edge in the tree $T^{\tau_{x^r}}$. The result will follow by inductive application of Lemma \ref{stabsetextensionlemma} to build out from this starting point.

Notice that, by Proposition \ref{geometricfixsubtree}, $T^{\iota_{x^r}\tau}$ must be a connected subtree, so given $T' \subseteq T^{\iota_{x^r}\tau}$, where $T'$ is a non-empty union of edges including their endpoints, either there is an edge in $T^{\tau_{x^r}} \setminus T'$ sharing a vertex with an edge in $T'$, or $T' = T^{\tau_{x^r}}$. An edge shares a vertex with an arbitrary edge $he_{\langle x^p \rangle}$ if and only if it is of the form $hx^ie_{\langle x^p \rangle}$ or $hy^ie_{\langle x^p \rangle}$, for some integer $i$.

Lemma \ref{stabsetextensionlemma} dictates that a vertex of $T^{\tau_{x^r}}$ can be contained in at most 2 edges of $T^{\tau_{x^r}}$ (because $x^khx^{a\frac{i}{2}}e_{\langle x^p \rangle} = x^khe_{\langle x^p \rangle}$ if $a$ is even). So $T^{\tau_{x^r}}$ is a (finite or bi-infinite) line. Moreover, also by Lemma \ref{stabsetextensionlemma}, a vertex labelled by a coset of $x$ (respectively $y$) in $T^{\tau_{x^r}}$ is connected to 2 edges if and only if $p$ (respectively $q$) is even, and 1 edge otherwise.

We now have everything we need to conclude.

If $p$ and $q$ are both odd the edge $x^{k} e_{\langle x^p \rangle}$ is all of $T^{\tau_{x^r}}$.

If $p$ is even and $q$ is odd, then the second edge in $T^{\tau_{x^r}}$ is $x^{\frac{r}{2}} x^{\frac{p}{2}} e_{\langle x^p \rangle}$. 

If $q$ is even and $p$ is odd, then the second edge in $T^{\tau_{x^r}}$ is $x^{\frac{r}{2}} y^{\frac{q}{2}} e_{\langle x^p \rangle}$.

If both $p$ and $q$ are even, then $T^{\tau_{x^r}}$ is the subtree of edges $x^{\frac{r}{2}}w e_{\langle x^p \rangle}$, where $w$ is any word in $x^{\frac{p}{2}}$ and $y^{\frac{q}{2}}$.
\end{proof}

\begin{remark}
Proposition \ref{xtaustabset} also applies to $T^{\tau_{y^r}}$, by putting $y^r$ into the role of $x^r$ in each of the preceding lemmas, where the conditions are now on whether $r = 2k$ modulo $q$ for some $k$.
\end{remark}

\begin{proposition}\label{stabset}
Let $\tau_g$ be of finite order. By Lemma \ref{trueformlemma}, there exists $h \in G_{p,q}$ and $r \in \mathbb{Z}$ such that $g = hx^r\tau(h)^{-1}$ or $g = hy^r\tau(h)^{-1}$. Assume the former.

\begin{enumerate}

\item Suppose there is $k \in \mathbb{Z}$ such that $r = 2k$ (mod $p$). 
\begin{enumerate}
\item If $p$ and $q$ are both odd then $T^{\tau_g}$ is an edge.
\item If exactly one of $p$ and $q$ is odd then $T^{\tau_g}$ is 2 edges.
\item If $p$ and $q$ are both even then $T^{\tau_g}$ is a bi-infinite line.
\end{enumerate} 

\item If there is no $k \in \mathbb{Z}$ such that $r = 2k$ (mod $p$), then $T^{\tau_g}$ is a single point.

\end{enumerate}

The case where $g = hy^r\tau(h)^{-1}$ is symmetrical, conditioned on whether $r$ is a multiple of 2 modulo $q$ instead.
\end{proposition}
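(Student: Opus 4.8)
The plan is to reduce everything to Proposition \ref{xtaustabset} via the conjugacy relation supplied by Lemma \ref{trueformlemma}. In the case $g = hx^r\tau(h)^{-1}$ that lemma gives $\tau_g = \iota_h \iota_{x^r}\tau\iota_h^{-1} = \iota_h\,\tau_{x^r}\,\iota_h^{-1}$, so $\tau_g$ and $\tau_{x^r}$ are conjugate in $\Aut(G_{p,q})$ by the inner automorphism $\iota_h$. Since the action $\Psi\colon \Aut(G_{p,q})\to\Aut(T)$ is a homomorphism, this conjugacy is inherited by the isometries they induce, and by compatibility $\iota_h$ acts on $T$ exactly as the group element $h$. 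Concretely, for every point $p\in T$ we have $\tau_g\cdot p = h\,(\tau_{x^r}\cdot(h^{-1}p))$.

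The key step is then to identify $T^{\tau_g}$ as a translate of $T^{\tau_{x^r}}$. Using that $h^{-1}$ acts by an isometry, $d(p,\tau_g\cdot p) = d(h^{-1}p,\ \tau_{x^r}\cdot(h^{-1}p))$ for every $p$. Taking infima over $p$ shows $l(\tau_g)=l(\tau_{x^r})$ (conjugate isometries have equal translation length), so $p$ realises the translation length of $\tau_g$ if and only if $h^{-1}p$ realises that of $\tau_{x^r}$. Hence $T^{\tau_g} = h\cdot T^{\tau_{x^r}}$. Because $h$ acts as a simplicial isometry, this translate is isomorphic as a subtree to $T^{\tau_{x^r}}$, so it is a single point, a single edge, two edges, or a bi-infinite line exactly when $T^{\tau_{x^r}}$ is. Proposition \ref{xtaustabset} records precisely these four possibilities, distinguished by the parities of $p$ and $q$ together with whether $r\equiv 2k\pmod p$ admits a solution, which yields the stated cases.

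For the remaining case $g=hy^r\tau(h)^{-1}$ the identical argument applies with $\tau_{y^r}$ in place of $\tau_{x^r}$, and the remark following Proposition \ref{xtaustabset} supplies the analogue of that proposition for $T^{\tau_{y^r}}$ with the divisibility condition now taken modulo $q$; this gives the symmetric statement. I expect no serious obstacle: the content is the bookkeeping of the previous paragraph, namely using compatibility to replace $\iota_h$ by $h$ and invoking invariance of translation length so that the equality $T^{\tau_g}=h\cdot T^{\tau_{x^r}}$ holds as sets of minimally-displaced points and not merely as fixed-point sets. The one point to handle with care is that $T^{\tau_g}$ is defined as this min-set; since $\tau_g$ has finite order it is elliptic, so in fact $l(\tau_g)=0$ and the min-set is a genuine fixed set, but the argument above does not rely on this and works verbatim in the translation-length formulation.
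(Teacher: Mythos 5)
Your proposal is correct and follows exactly the paper's own argument: conjugate $\tau_g$ to $\tau_{x^r}$ via $\iota_h$ (Lemma \ref{trueformlemma}), use compatibility to see that $\iota_h$ acts as $h$, conclude $T^{\tau_g} = h \cdot T^{\tau_{x^r}}$, and invoke Proposition \ref{xtaustabset} (with its remark for the $y^r$ case). The only difference is that you spell out the invariance of translation length and of the min-set under conjugation by an isometry, which the paper leaves implicit.
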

\begin{proof}
We prove the stated case, where $g = hx^r\tau(h)^{-1}$. By compatibility, $\iota_h$ acts like $h$, so the action of $\tau_g$ is the action of $\tau_{x^r}$ translated by $h$. This means $T^{\tau_g}$ is a translation of $T^{\iota_{x^r}\tau}$, and the proposition follows by Proposition \ref{xtaustabset}.
\end{proof}

Since, by Corollary \ref{fixaction}, $\Fix(\tau_g) \leq G$ must stabilise $T^{\tau_g}$ as a set, this will allow us to find the fixed points of $\tau_g$.

\begin{lemma}\label{ellipticfixedpoints}
Suppose $\tau_g$ is of finite order and at least one of $p$ and $q$ are odd. Then $\Fix(\tau_g) = \{1\}$.
\end{lemma}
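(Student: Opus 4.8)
The plan is to combine the geometric picture from Proposition \ref{stabset} with the containment supplied by Corollary \ref{fixaction} and a single algebraic invariant, the height homomorphism. First I would record the geometric input: under the standing hypothesis that at least one of $p,q$ is odd, Proposition \ref{stabset} forces $T^{\tau_g}$ to be bounded, namely a single vertex, a single edge, or a path of two edges. The only unbounded possibility listed there, a bi-infinite line, occurs exactly in the case where both $p$ and $q$ are even, which the hypothesis excludes. So $\tau_g$ acts as an elliptic isometry of $T$ with a bounded fixed subtree.

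Next I would pin down the setwise stabiliser of this bounded subtree. The key claim is that in each of the three cases there is a vertex $v$ fixed by every element of $\Stab(T^{\tau_g})$, whence $\Stab(T^{\tau_g}) \leq G_v \cong \mathbb{Z}$. For a single vertex this is immediate. For a single edge, since $G_{p,q}$ acts without inversions (Lemma \ref{bassserretheory}) the setwise stabiliser equals the edge group, which is contained in the stabiliser of either endpoint. For a two-edge path the unique central vertex is fixed by any isometry preserving the path. Applying Corollary \ref{fixaction} then gives $\Fix(\tau_g) \leq \Stab(T^{\tau_g}) \leq G_v$.

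Then I would bring in the height homomorphism $ht : G_{p,q} \to \mathbb{Z}$, $x \mapsto q$, $y \mapsto p$, from the proof of Lemma \ref{finiteordergtaug}. Since inner automorphisms preserve $ht$ (the target is abelian) and $\tau$ negates it, we obtain $ht \circ \tau_g = ht \circ \iota_g \circ \tau = ht \circ \tau = -ht$. Consequently any fixed point $w$ of $\tau_g$ satisfies $ht(w) = -ht(w)$, so $\Fix(\tau_g) \leq \ker(ht)$. Finally, $G_v$ is a conjugate of $\langle x \rangle$ or $\langle y \rangle$, and because $ht$ is conjugation-invariant with $ht(x) = q \neq 0$ and $ht(y) = p \neq 0$, its restriction to $G_v$ is injective, so $\ker(ht) \cap G_v = \{1\}$. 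Combining the two containments yields $\Fix(\tau_g) \leq \ker(ht) \cap G_v = \{1\}$.

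The step I expect to be the main obstacle is the middle one: arguing cleanly that the setwise stabiliser of the bounded fixed subtree sits inside a single vertex group. The care required is in the two-edge case, where $\Stab(T^{\tau_g})$ may contain an element swapping the two leaves (for example $x^{p/2}$ when $p$ is even), which does not fix the edges pointwise; one must note that such an element nevertheless fixes the central vertex, so the containment $\Stab(T^{\tau_g}) \leq G_v$ still holds. This is exactly where the no-inversions property and the existence of a fixed centre of a bounded subtree do the work, after which the computation with $ht$ is routine.
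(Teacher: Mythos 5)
Your proof is correct, and it shares the paper's geometric skeleton but finishes with a genuinely different algebraic step. Like the paper, you invoke Proposition \ref{stabset} to see that $T^{\tau_g}$ is bounded (a vertex, one edge, or two edges) and Corollary \ref{fixaction} to trap $\Fix(\tau_g)$ inside $\Stab(T^{\tau_g})$. The paper then argues: elements stabilising a bounded subtree are elliptic, elliptic elements are conjugates $hx^kh^{-1}$ or $hy^kh^{-1}$, and $\tau_g$ sends a conjugate of $x^k$ to a conjugate of $x^{-k}$, which cannot be fixed since $x^k$ is not conjugate to $x^{-k}$ for $k \neq 0$ --- a conjugacy fact the paper asserts without proof. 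You instead localise $\Stab(T^{\tau_g})$ inside a single vertex group $G_v$ and kill it with the height homomorphism $ht$, noting $ht \circ \tau_g = -ht$ forces $\Fix(\tau_g) \leq \ker(ht)$ while $\ker(ht) \cap G_v = \{1\}$. Your route has the advantage of being fully self-contained: the height computation is explicit, and in fact it is the natural proof of the conjugacy assertion the paper leaves unjustified (conjugate elements have equal height, and $ht(x^k) = kq \neq -kq = ht(x^{-k})$). One simplification worth noting: the vertex-localisation step you flagged as the main obstacle is not actually needed. Ellipticity alone suffices --- any non-trivial elliptic element is a conjugate of a non-zero power of $x$ or $y$, hence has height $kq \neq 0$ or $kp \neq 0$, and so cannot lie in $\ker(ht)$. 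Replacing your middle step by the paper's one-line ellipticity observation, then finishing with your height argument, gives the shortest correct proof; the care you spent on the leaf-swapping element in the two-edge case becomes unnecessary.
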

\begin{proof}
By Proposition \ref{stabset}, in these cases $T^{\tau_g}$ is just a point or 1 or 2 edges, so any element that fixes this subtree (setwise) must be elliptic, and hence of the form $hx^kh^{-1}$ or $hy^kh^{-1}$. Since $\tau_g(hx^kh^{-1}) = g\tau(h)x^{-k}(g\tau(h))^{-1}$ and $x^k$ is not conjugate to $x^{-k}$ for $k \neq 0$, no conjugate of a non-zero power of $x$ can be a fixed point. Likewise for conjugates of non-zero powers of $y$. The lemma follows since no non-trivial elliptic elements can be fixed points..
\end{proof}

If $p$ and $q$ are both even, $T^{\tau_g}$ may be a bi-infinite line. This means there are hyperbolic elements which are candidate elements of $\Fix(\tau_g)$. Note in the following we no longer need to check divisibility of $r$ by 2 modulo $p$ (or $q$), because $p$ and $q$ are even so this is equivalent to $r$ being even.

\begin{lemma}\label{hyperbolicfixedpoints}
Suppose $\tau_g$ is of finite order and that $p$ and $q$ are both even. We condition on the structure of the form in Lemma \ref{trueformlemma}.

If $\tau_g = \iota_h \iota_{x^r} \tau \iota_h^{-1}$, then $\Fix(\tau_g) = \{1\}$ if $r$ is odd, and $$Fix(\tau_g) = \langle hx^{\frac{r}{2}}x^{\frac{p}{2}}y^{-\frac{q}{2}}x^{-\frac{r}{2}}h^{-1} \rangle \cong \mathbb{Z},$$ if $r$ is even.

Likewise if $\tau_g = \iota_h \iota_{y^r} \tau \iota_h^{-1}$, then $\Fix(\tau_g) = \{1\}$ if $r$ is odd, and $$Fix(\tau_g)= \langle hy^{\frac{r}{2}}x^{\frac{p}{2}}y^{-\frac{q}{2}}y^{-\frac{r}{2}}h^{-1} \rangle \cong \mathbb{Z},$$ if $r$ is even.
\end{lemma}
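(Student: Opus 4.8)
The plan is to reduce the general finite-order automorphism to the single base case $\tau$ by two successive conjugations inside $\Aut(G_{p,q})$, and then to compute $\Fix(\tau)$ directly from the geometry already in hand. The fact that powers everything is that, since $x^p = y^q$, the element $d := x^{\frac{p}{2}}y^{-\frac{q}{2}}$ satisfies $\tau(d) = x^{-\frac{p}{2}}y^{\frac{q}{2}} = d$ (the second equality is exactly the relation $x^p = y^q$), so $d$ is fixed by $\tau$, whereas $\tau(x^p) = x^{-p}$ inverts the centre.

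First I would record the conjugation identities. Writing $w \in \{x,y\}$, a direct computation gives $\tau_{w^r} = \iota_{w^{r/2}}\,\tau\,\iota_{w^{r/2}}^{-1}$ whenever $r$ is even (using only $\tau(w) = w^{-1}$), and combining this with Lemma \ref{trueformlemma} yields $\tau_g = \iota_{hw^{r/2}}\,\tau\,\iota_{hw^{r/2}}^{-1}$. Since fixed-point subgroups transform by conjugation, namely $\Fix(\varphi\psi\varphi^{-1}) = \varphi(\Fix(\psi))$, the even-$r$ case reduces to $\Fix(\tau_g) = hw^{\frac{r}{2}}\,\Fix(\tau)\,w^{-\frac{r}{2}}h^{-1}$, so it suffices to compute $\Fix(\tau)$. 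For this, by Proposition \ref{xtaustabset} with $r=0$ the fixed subtree $T^{\tau}$ is the bi-infinite line $\omega_0$ through $e_{\langle x^p\rangle}$, so Corollary \ref{fixaction} puts $\Fix(\tau) \leq \Stab(\omega_0)$. The argument of Lemma \ref{ellipticfixedpoints} applies verbatim (using that $x^k$ is not conjugate to $x^{-k}$ for $k\neq 0$, via the homomorphism $ht$ of Lemma \ref{finiteordergtaug}) to exclude all non-trivial elliptic fixed points, so any non-trivial element of $\Fix(\tau)$ is hyperbolic. A hyperbolic element stabilising $\omega_0$ has $\omega_0$ as its axis (an invariant line always contains the axis, and here both are lines), hence lies in $\Hyp(\omega_0)$. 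As elements of $G_{p,q}$ preserve the bipartition of $T$ into $\langle x\rangle$- and $\langle y\rangle$-type vertices their translation lengths are even, and $d = (x^{\frac{p}{2}}y^{\frac{q}{2}})x^{-p}$ has translation length $2$, so $d$ is a minimal-translation element and Lemma \ref{axissubgroup} gives $\Hyp(\omega_0) = \langle d\rangle \times \langle x^p\rangle$. Finally $\tau(d^m(x^p)^n) = d^m(x^p)^{-n}$, which is fixed if and only if $n=0$, giving $\Fix(\tau) = \langle d\rangle = \langle x^{\frac{p}{2}}y^{-\frac{q}{2}}\rangle \cong \mathbb{Z}$. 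Substituting into the reduction formula yields exactly the two stated generators $hw^{\frac{r}{2}}x^{\frac{p}{2}}y^{-\frac{q}{2}}w^{-\frac{r}{2}}h^{-1}$.

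For odd $r$ the halving is unavailable, so I would instead conjugate $\tau_g$ only to $\tau_{w^r}$ and argue directly. Since $p$ (respectively $q$) is even there is no $k$ with $r \equiv 2k \pmod{p}$ (respectively $\bmod\ q$), so by Proposition \ref{stabset} the fixed subtree $T^{\tau_g}$ is a single vertex. By Corollary \ref{fixaction} any fixed point stabilises this vertex and is therefore elliptic, and the elliptic exclusion above forces $\Fix(\tau_g) = \{1\}$.

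The core of the argument, and the only place any genuine computation happens, is the determination of $\Fix(\tau)$ in the hyperbolic case: identifying $d = x^{\frac{p}{2}}y^{-\frac{q}{2}}$ as a minimal-translation generator of the $\mathbb{Z}$-factor of $\Hyp(\omega_0)$, and checking via $\tau(x^p) = x^{-p}$ that the central $\langle x^p\rangle$-direction is wholly inverted so that only $\langle d\rangle$ survives. The expected obstacle is purely bookkeeping — making sure the chosen generator $d$ (rather than $x^{\frac{p}{2}}y^{\frac{q}{2}}$) is the one fixed by $\tau$, which is precisely where the relation $x^p=y^q$ is used; the two conjugation reductions are then formal.
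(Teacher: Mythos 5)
Your proposal is correct and takes essentially the same route as the paper: determine the fixed subtree via Proposition \ref{xtaustabset}, exclude non-trivial elliptic fixed points as in Lemma \ref{ellipticfixedpoints}, identify the translation subgroup of the invariant line as $\langle x^{\frac{p}{2}}y^{-\frac{q}{2}}\rangle \times \langle x^p \rangle$ via Lemma \ref{axissubgroup} and minimality of translation length $2$, and use the relation $x^p = y^q$ to see that only the $\langle x^{\frac{p}{2}}y^{-\frac{q}{2}}\rangle$ factor is fixed while the centre is inverted. The only difference is organisational: you conjugate $\tau_g$ down to the single base case $\tau$ (via $\tau_{w^r} = \iota_{w^{r/2}}\tau\iota_{w^{r/2}}^{-1}$ for even $r$) and transport the answer using $\Fix(\varphi\psi\varphi^{-1}) = \varphi(\Fix(\psi))$, whereas the paper carries the conjugators $h$ and $x^{\frac{r}{2}}$ through the computation and verifies directly that the stated generator is fixed by $\tau_g$; the underlying calculation is the same.
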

\begin{proof}
We consider the case where $\tau_g = \iota_h \iota_{x^r} \tau \iota_h^{-1}$.

If $r$ is odd, then $T^{\tau_g}$ is just a point, by Proposition \ref{stabset}. By Corollary \ref{fixaction}, any candidate fixed point needs to be elliptic. For the reasons given in Lemma \ref{ellipticfixedpoints}, $\varphi_g$ can have no non-trivial elliptic fixed points.

Suppose $r$ is even. By Corollary \ref{fixaction}, $\Fix(\tau_g) \leq \Stab(T^{\tau_g})$. Since elliptic elements can't be fixed points, for the reasons given in Lemma \ref{ellipticfixedpoints}, we can restrict our attention to $\Hyp(T^{\tau_g})$, that is the orientation preserving elements.

By following the proofs of Proposition \ref{xtaustabset} and Proposition \ref{stabset}, one can calculate $T^{\tau_g}$ exactly, and choose a hyperbolic element acting along that axis as a candidate fixed point. A candidate fixed point should be in the kernel of the height homomorphism $ht: G_{p,q} \rightarrow \mathbb{Z}$, given by $x \mapsto q, y \mapsto p$, since $ht(\tau_g(h)) = -ht(h)$ for all $h \in G_{p,q}$. Such an element is $hx^{\frac{r}{2}}x^{\frac{p}{2}}y^{-\frac{q}{2}}x^{-\frac{r}{2}}h^{-1}$.

We find $hx^{\frac{r}{2}}x^{\frac{p}{2}}y^{-\frac{q}{2}}x^{-\frac{r}{2}}h^{-1}$ is in fact a fixed point of $\tau_g$ since, \begin{align*}
\tau_g (hx^{\frac{r}{2}}x^{\frac{p}{2}}y^{-\frac{q}{2}}x^{-\frac{r}{2}}h^{-1}) &= \iota_h \iota_{x^r} \tau \iota_h^{-1} (hx^{\frac{r}{2}}x^{\frac{p}{2}}y^{-\frac{q}{2}}x^{-\frac{r}{2}}h^{-1})\\
&=  \iota_h \iota_{x^{\frac{r}{2}}} (x^{-\frac{p}{2}}y^{\frac{q}{2}})\\
&=  \iota_h \iota_{x^{\frac{r}{2}}} (x^{-\frac{p}{2}}(x^py^{-q})y^{\frac{q}{2}})\\
&=  \iota_h \iota_{x^{\frac{r}{2}}} (x^{\frac{p}{2}}y^{-\frac{q}{2}})\\
&= hx^{\frac{r}{2}}x^{\frac{p}{2}}y^{-\frac{q}{2}}x^{-\frac{r}{2}}h^{-1},
\end{align*} where in the third equality we use the fact $x^p = y^q$. So, $$H := \langle hx^{\frac{r}{2}}x^{\frac{p}{2}}y^{-\frac{q}{2}}x^{-\frac{r}{2}}h^{-1} \rangle \leq \Fix(\tau_g).$$

By Lemma \ref{axissubgroup}, $\Hyp(T^{\tau_g}) \cong \mathbb{Z}^2$ where the factors are $\langle x^p \rangle$ and $H$; notice that the given generator of $H$ has translation length of 2, which is the minimal possible translation length of any element acting hyperbolically, therefore elements of $H$ act directly (see Definition \ref{directdefn}). No non-trivial element of the centre can be fixed by $\tau_g$ as they all act elliptically, therefore $\Fix(\tau_g) = H \cong \mathbb{Z}$.

The proof of the second case is symmetrical.
\end{proof}

\section{Fixed Points of Automorphisms in $[\sigma]$ and $[\tau\sigma]$}

Suppose $p = q$. Then $\Out(G_{p,q}) \cong C_2 \times C_2$. One factor is generated by $[\tau]$, as handled by the previous section. The other is generated by $[\sigma]$, where $\sigma(x) = y$ and $\sigma(y) = x$. All of the non-inner automorphisms in this case not in $[\tau]$ can be written as $\varphi_{g,r} := \iota_g \sigma \tau^{r}$ for $r \in \{0,1\}$. We take the same approach as in the previous section, dealing with infinite order automorphisms algebraically in Lemma \ref{algebraicsigma} and the finite order automorphisms using a geometric argument in Lemma \ref{geometricsigma}. We will use the following characterisation.

\begin{lemma}\label{finiteordergsigmag}
Take $\varphi_{g,r} = \iota_g \sigma \tau^{r}$ an automorphism as above. The following are equivalent:

\begin{enumerate}
\item $g(\sigma\tau^r(g)) \in Z(G_{p,p})$,
\item $\varphi_{g,r}$ has order 2,
\item $\varphi_{g,r}$ has finite order.
\end{enumerate}
\end{lemma}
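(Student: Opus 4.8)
The plan is to mirror the structure of Lemma \ref{finiteordergtaug}, proving the cyclic chain of implications $(1) \Rightarrow (2) \Rightarrow (3) \Rightarrow (1)$. Write $\alpha := \sigma\tau^r$. Since $\sigma$ and $\tau$ are commuting involutions, $\alpha$ is an automorphism of order $2$, so $\varphi_{g,r} = \iota_g\alpha$ is exactly the $\iota_g\tau$ situation with $\alpha$ in place of $\tau$. For $(1)\Rightarrow(2)$ I would compute, using $\alpha\iota_g\alpha^{-1} = \iota_{\alpha(g)}$ and $\alpha^2 = \mathrm{id}$, that $\varphi_{g,r}^2 = \iota_g\alpha\iota_g\alpha = \iota_{g\alpha(g)}$; if $g\alpha(g) = g(\sigma\tau^r(g)) \in Z(G_{p,p})$ this is trivial, so $\varphi_{g,r}^2 = \mathrm{id}$, and since $\varphi_{g,r}$ is non-inner (it represents a non-trivial class in $\Out(G_{p,p})$) it is not the identity, giving order exactly $2$. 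The implication $(2) \Rightarrow (3)$ is immediate.

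The substance is $(3) \Rightarrow (1)$. Setting $c := g\alpha(g)$, finite order of $\varphi_{g,r}$ forces $\varphi_{g,r}^2 = \iota_c$ to have finite order, so the image $\bar c$ of $c$ in $\bar G := G_{p,p}/Z(G_{p,p}) \cong C_p * C_p$ is a torsion element. Unlike the $\tau$ case, the height homomorphism does not finish the argument: for $r = 0$ one computes $ht(c) = 2\,ht(g)$, which need not vanish, reflecting the fact that here $c$ may be a non-trivial central element rather than the identity (indeed $\sigma$ fixes the centre, as $\sigma(x^p) = y^p = x^p$). Instead I would exploit the relation $\alpha(c) = \alpha(g)\,\alpha^{2}(g) = \alpha(g)\,g = g^{-1}c\,g$, so that $\bar\alpha(\bar c) = \bar g^{-1}\bar c\,\bar g$ is conjugate to $\bar c$ in $\bar G$. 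The key point is that $\bar\alpha = \bar\sigma\,\bar\tau^{\,r}$ \emph{swaps} the two free factors $A = \langle \bar x\rangle$ and $B = \langle\bar y\rangle$ for both $r \in \{0,1\}$, since $\bar\tau$ preserves each factor while $\bar\sigma$ interchanges them. If $\bar c \neq 1$, then, being torsion in a free product, $\bar c$ is conjugate into exactly one factor, say $A$; then $\bar\alpha(\bar c)$ is conjugate into $B$, yet it is also conjugate to $\bar c$ and hence into $A$. Since a non-trivial torsion element of $C_p * C_p$ cannot be conjugate into both factors, this is a contradiction. Therefore $\bar c = 1$, i.e. $c = g(\sigma\tau^r(g)) \in Z(G_{p,p})$, which is $(1)$.

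I expect the main obstacle to be precisely this last direction, and specifically the need to replace the height computation (which sufficed for $\tau$ because $\tau$ inverts the centre) with the free-product conjugacy argument. The two facts I would need to invoke cleanly are that finite order of an inner automorphism $\iota_c$ is equivalent to $\bar c$ being torsion in $G_{p,p}/Z(G_{p,p})$, and the standard normal-form fact that in $A * B$ a non-trivial element conjugate into one factor is never conjugate into the other; assembling these with the observation that $\sigma\tau^r$ always swaps the two factors is the crux of the proof.
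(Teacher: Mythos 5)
Your proof is correct, but your argument for $(3)\Rightarrow(1)$ --- the only non-trivial implication --- is genuinely different from the paper's. The paper stays geometric: since $\varphi_{g,r}$ has finite order, its action on the Bass-Serre tree must fix a point; by Lemma \ref{compatibilitylemma} it sends every vertex $hv_{\langle x \rangle}$ to $g\sigma\tau^r(h)v_{\langle y \rangle}$ (and vice versa), so it fixes no vertex, and the fixed point is therefore the midpoint of an inverted edge; then $\varphi_{g,r}^2 = \iota_{g(\sigma\tau^r(g))}$ also fixes that midpoint, so by compatibility the element $g(\sigma\tau^r(g))$ fixes it, hence (the $G_{p,p}$-action being without inversions) fixes the whole edge and lies in the edge stabiliser, which is $Z(G_{p,p})$ by Lemma \ref{centre}. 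Your argument instead passes to $G_{p,p}/Z(G_{p,p}) \cong C_p * C_p$ and combines the identity $\sigma\tau^r(c) = g^{-1}cg$ for $c = g(\sigma\tau^r(g))$, the observation that $\bar\sigma\bar\tau^{\,r}$ swaps the free factors, and two classical free-product facts: torsion elements are conjugate into a factor, and a non-trivial element cannot be conjugate into both factors. Both routes are sound, and they are essentially dual to one another --- vertex orbits of the tree correspond to conjugacy classes of the factors of the quotient, the orbit-swap the paper exploits is exactly your factor-swap, and ``edge stabilisers equal the centre'' is the tree-side avatar of your quotient map. What the paper's route buys is self-containment: it reuses only machinery it has already established (the compatible action, Lemma \ref{centre}), whereas yours dispenses with the tree entirely at the cost of importing the torsion and conjugacy theorems for free products, which the paper never states. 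Your diagnosis of why the height-homomorphism trick from Lemma \ref{finiteordergtaug} fails here, and why the condition $g(\sigma\tau^r(g)) = 1$ is correctly absent from this lemma's list of equivalences, is accurate.
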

\begin{proof}
$(1 \implies 2)$. Suppose $g(\sigma\tau^r(g)) \in Z(G_{p,p})$. Then $\varphi_{g,r}^2 = \iota_g\sigma\tau^r\iota_g\sigma\tau^r = \iota_{g(\sigma\tau^r(g))}(\sigma\tau^r)^2 = id$, so $\varphi_{g,r}$ is order 2.

$(2 \implies 3)$. If $\varphi_{g,r}$ is order 2 then clearly it is finite order.

$(3 \implies 1)$. Suppose $\varphi_{g,r}$ is of finite order. Then we consider its action on the Bass-Serre tree. Given it is acting with finite order, $\varphi_{g,r}$ must fix a point on the tree. On the other hand, $\varphi_{g,r}$ cannot fix a vertex of the form $h v_{\langle x \rangle}$, since \begin{align*}
\varphi_{g,r} \cdot h v_{\langle x \rangle} &= \iota_g \sigma \tau^r \cdot h v_{\langle x \rangle}\\
&= \iota_g \sigma \cdot \tau^r(h) v_{\langle x \rangle}\\
&= g\sigma\tau^r(h) v_{\langle y \rangle},
\end{align*} where the second and third equalities are by Lemma \ref{compatibilitylemma}. This vertex cannot be fixed because the orbits of $v_{\langle x \rangle}$ and $v_{\langle y \rangle}$ are distinct. Likewise for vertices of the form $h v_{\langle y \rangle}$. So the point on the tree fixed by $\varphi_{g,r}$ must be the midpoint of an edge which is inverted by the automorphism's action.

The automorphism $\varphi_{g,r}^2 = \iota_{g(\sigma\tau^r(g))}$ also fixes this edge midpoint. By compatibility, $g(\sigma\tau^r(g))$ fixes the edge midpoint and, since the group acts without inversions, the whole edge. But, by Lemma \ref{centre}, each edge is stabilised by the centre, so $g(\sigma\tau^r(g)) \in Z(G_{p,p})$ as required.

\end{proof}

\begin{lemma}\label{algebraicsigma}
Suppose $\varphi_{g,0} = \iota_g \sigma$ is of infinite order. Then $\Fix(\varphi_{g,0}) = C(g(\sigma\tau(g))) \cong \mathbb{Z}^2$.

Suppose $\varphi_{g,1} = \iota_g \sigma \tau$ is of infinite order. Then $\Fix(\varphi_{g,1}) = \maxc{g(\sigma\tau(g))} \cong \mathbb{Z}$.
\end{lemma}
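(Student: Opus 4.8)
The plan is to follow the template of Lemma \ref{algebraictau}, using that each $\varphi_{g,r}$ squares to an inner automorphism. Since $\sigma^2 = (\sigma\tau)^2 = \mathrm{id}$ and $\varphi\iota_g = \iota_{\varphi(g)}\varphi$, I compute $\varphi_{g,0}^2 = \iota_g\sigma\iota_g\sigma = \iota_{g\sigma(g)}$ and $\varphi_{g,1}^2 = \iota_{g\sigma\tau(g)}$. Write $c_0 := g\sigma(g)$ and $c_1 := g\sigma\tau(g)$, so that $\varphi_{g,r}^2 = \iota_{c_r}$. By Lemma \ref{finiteordergsigmag}, infinite order of $\varphi_{g,r}$ means $c_r \notin Z(G_{p,p})$, and in fact no non-zero power of $c_r$ is central, since otherwise $\varphi_{g,r}^2 = \iota_{c_r}$ would have finite order. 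A non-central elliptic element is conjugate to a power of $x$ or $y$, whose $p$-th power lies in $\langle x^p \rangle = Z(G_{p,p})$; so having no central power forces $c_r$ to be hyperbolic.

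With $c_r$ hyperbolic, I would apply Lemma \ref{hyperboliccentralizers} (to the direct generator of $\maxc{c_r}$, since $c_r$ itself need not act directly) to get $\Fix(\varphi_{g,r}^2) = C(c_r) = \maxc{c_r} \times \langle x^p \rangle \cong \mathbb{Z}^2$. As a fixed point of $\varphi_{g,r}$ is fixed by $\varphi_{g,r}^2$, this gives the upper bound $\Fix(\varphi_{g,r}) \leq \maxc{c_r} \times \langle x^p \rangle$. For the lower bound I first check $c_r$ itself is fixed: using $(\sigma\tau^r)^2 = \mathrm{id}$, $\varphi_{g,r}(c_r) = \iota_g\,\sigma\tau^r\big(g\,\sigma\tau^r(g)\big) = \iota_g\big(\sigma\tau^r(g)\cdot g\big) = g\,\sigma\tau^r(g) = c_r$. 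Exactly as in Lemma \ref{algebraictau}, uniqueness of roots in the maximal cyclic subgroup (Lemma \ref{maxc}) upgrades this to $\maxc{c_r} \leq \Fix(\varphi_{g,r})$: if $\hat{c}^k = c_r$ then $\varphi_{g,r}(\hat c)^k = \hat c^k$ forces $\varphi_{g,r}(\hat c) = \hat c$.

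The two cases then diverge according to how $\varphi_{g,r}$ acts on the centre, which is the crux of the argument. Because $p = q$, we have $\sigma(x^p) = y^p = x^p$, so $\varphi_{g,0}(x^p) = \iota_g(x^p) = x^p$ and $\langle x^p \rangle \leq \Fix(\varphi_{g,0})$; together with the upper bound and $\maxc{c_0} \leq \Fix(\varphi_{g,0})$ this yields $\Fix(\varphi_{g,0}) = \maxc{c_0} \times \langle x^p \rangle = C(c_0) \cong \mathbb{Z}^2$. By contrast $\sigma\tau(x^p) = x^{-p}$, so $\varphi_{g,1}(x^p) = x^{-p}$ and $\varphi_{g,1}$ fixes no non-trivial central element. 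Hence $\Fix(\varphi_{g,1})$ has rank at most one inside $C(c_1) \cong \mathbb{Z}^2$ (a rank-two subgroup would be finite index and so meet $\langle x^p \rangle$ non-trivially), and since it contains the maximal cyclic subgroup $\maxc{c_1}$ we conclude $\Fix(\varphi_{g,1}) = \maxc{c_1} \cong \mathbb{Z}$.

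I do not expect a serious obstacle beyond adapting Lemma \ref{algebraictau}: the only genuinely new inputs are the computation $\sigma(x^p) = x^p$ versus $\sigma\tau(x^p) = x^{-p}$ that separates the $\mathbb{Z}^2$ and $\mathbb{Z}$ conclusions, and the verification that $c_r$ is hyperbolic rather than elliptic.
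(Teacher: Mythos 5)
Your treatment of $\varphi_{g,1}$ is correct and is essentially the paper's argument (the paper simply declares that case ``completely analogous'' to Lemma \ref{algebraictau}); there the issue described below is harmless, because the conclusion only uses that $C(c_1) \cong \mathbb{Z}^2$, that the centre is not fixed, and that $\maxc{c_1}$ is maximal cyclic. The gap is in the $\varphi_{g,0}$ case, at the step $C(c_0) = \maxc{c_0} \times \langle x^p \rangle$. You justify this by applying Lemma \ref{hyperboliccentralizers} to ``the direct generator of $\maxc{c_0}$'', but this conflates two different notions: being a generator of the maximal cyclic subgroup (i.e.\ not a proper power) and acting directly in the sense of Definition \ref{directdefn} (i.e.\ being a power of an element of minimal translation length along the axis). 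These genuinely differ: if $\hat{g}$ acts directly with minimal translation length and is not a proper power, then $c = \hat{g}^2 x^p$ is not a proper power either --- in the coordinates of $\Hyp(T^{\hat{g}}) = \langle \hat{g} \rangle \times \langle x^p \rangle \cong \mathbb{Z}^2$ it is the primitive vector $(2,1)$ --- so $\maxc{c} = \langle c \rangle$; yet $c$ does not act directly, and $\maxc{c} \times \langle x^p \rangle$ corresponds to $\langle (2,1),(0,1)\rangle = 2\mathbb{Z}\times\mathbb{Z}$, a proper index-two subgroup of $C(c) = \langle \hat{g} \rangle \times \langle x^p \rangle$. The paper flags precisely this subtlety, noting that $\maxc{g\sigma(g)} \times \langle x^p \rangle$ equals the centralizer only when the relevant element acts directly, and in general is merely contained in it; and you give no argument that elements of the form $g\sigma(g)$ avoid this phenomenon. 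Consequently your sandwich establishes only $\maxc{c_0} \times \langle x^p \rangle \leq \Fix(\varphi_{g,0}) \leq C(c_0)$, which pins down the isomorphism type $\mathbb{Z}^2$ but not the asserted set equality $\Fix(\varphi_{g,0}) = C(c_0)$.

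The paper closes exactly this gap with one further argument, which your proposal is missing: since $c_0 \in \Fix(\varphi_{g,0})$, an element $h$ commutes with $c_0$ if and only if $\varphi_{g,0}(h)$ does, so $\varphi_{g,0}$ restricts to an automorphism of $C(c_0) \cong \mathbb{Z}^2$; this restriction fixes the finite-index subgroup $\maxc{c_0} \times \langle x^p \rangle$ pointwise, and an automorphism of $\mathbb{Z}^2$ fixing a finite-index subgroup pointwise must be the identity (for any $v$ there is $n > 0$ with $v^n$ in the subgroup, so $\varphi_{g,0}(v)^n = v^n$, and roots are unique in a torsion-free abelian group). Hence $C(c_0) \leq \Fix(\varphi_{g,0})$, giving the stated equality. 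Adding this paragraph would complete your proof; everything else (the computation $\varphi_{g,r}^2 = \iota_{c_r}$, the hyperbolicity of $c_r$, the root argument giving $\maxc{c_r} \leq \Fix(\varphi_{g,r})$, and the dichotomy $\sigma(x^p) = x^p$ versus $\sigma\tau(x^p) = x^{-p}$) is sound and matches the paper's route.
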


\begin{proof}
The proof for automorphisms $\varphi_{g,1}$ is completely analogous to that of Lemma \ref{algebraictau}.

For automorphisms $\varphi_{g,0}$, the only difference is that the height argument that rules the centre out of the fixed group in the other cases does not apply due to the absence of $\tau$. Indeed, $$\iota_g \sigma (x^p) = gy^pg^{-1} = y^p = x^p,$$ so the centre is fixed by such automorphisms. Thus, $$\maxc{g\sigma(g)} \times \langle x^p \rangle \leq \Fix(\varphi_{g,0}) \leq C(g(\sigma\tau(g))).$$ In particular notice $\Fix(\varphi_{g,0}) \cong \mathbb{Z}^2$, as $\maxc{g\sigma(g)} \times \langle x^p \rangle$ is a rank 2 subgroup of $C(g(\sigma\tau(g)))$ which is isomorphic to $\mathbb{Z}^2$ by Lemma \ref{hyperboliccentralizers}. Recall if $g(\sigma\tau(g))$ acts directly, $\maxc{g\sigma(g)} \times \langle x^p \rangle = C(g(\sigma\tau(g)))$, but in general the former is contained in the latter.

Furthermore since $g(\sigma\tau(g))$ is fixed, it must be that $\varphi_{g,0}(C(g(\sigma\tau(g)))) = C(g(\sigma\tau(g)))$, because some element $h$ commutes with $g(\sigma\tau(g))$ if and only if $\varphi_{g,0}(h)$ commutes with $\varphi_{g,0}(g(\sigma\tau(g))) = g(\sigma\tau(g))$. So $\varphi_{g,0}$ descends to an automorphism of $C(g(\sigma\tau(g)))$. But since $\maxc{g\sigma(g)} \times \langle x^p \rangle \leq \Fix(\varphi_{g,0})$, the only way to extend this to an automorphism of $C(g(\sigma\tau(g)))$ is as the identity, so $C(g(\sigma\tau(g))) \leq \Fix(\varphi_{g,0})$.
\end{proof}

\begin{lemma}\label{geometricsigma}
Suppose $\varphi_{g,r}$ is of finite order, where $r \in \{0,1\}$. Then $\Fix(\varphi_{g,r}) \leq Z(G_{p,p})$. Indeed $\Fix(\varphi_{g,1}) = \{1\}$ and $\Fix(\varphi_{g,0}) = \langle x^p \rangle \cong \mathbb{Z}$.
\end{lemma}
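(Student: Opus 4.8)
The plan is to locate the fixed-point subtree $T^{\varphi_{g,r}}$ exactly and then apply Corollary \ref{fixaction}. The proof of Lemma \ref{finiteordergsigmag} already does most of the geometric work: when $\varphi_{g,r}$ has finite order it is elliptic, and since $\sigma$ interchanges the two vertex orbits, $\varphi_{g,r}$ fixes no vertex. Hence the only point it can fix is the midpoint $m$ of an edge $e$ that it inverts. First I would argue that $m$ is in fact the \emph{only} fixed point, i.e. $T^{\varphi_{g,r}} = \{m\}$: if $\varphi_{g,r}$ fixed a second point, then, being an isometry, it would fix the geodesic joining them, and in particular fix a nondegenerate initial segment of one of the two half-edges of $e$ emanating from $m$; but inverting $e$ swaps these two half-edges, a contradiction.

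Next I would feed this into Corollary \ref{fixaction} to get $\Fix(\varphi_{g,r}) \leq \Stab(\{m\})$, the setwise stabiliser of the midpoint $m$ under the action of $G_{p,p}$. Because $G_{p,p}$ acts without inversions (Lemma \ref{bassserretheory}), any group element fixing $m$ fixes the whole edge $e$ and therefore lies in the edge stabiliser, which is the centre $Z(G_{p,p}) = \langle x^p \rangle$ by Lemma \ref{centre}; conversely the centre fixes every edge. This identifies $\Stab(\{m\}) = Z(G_{p,p})$ and yields the first assertion $\Fix(\varphi_{g,r}) \leq Z(G_{p,p})$.

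It then remains to see how $\varphi_{g,r}$ acts on the centre, a routine computation using $x^p = y^p$. For $\varphi_{g,0} = \iota_g\sigma$ one has $\sigma(x^p) = y^p = x^p$, and conjugation by $g$ is trivial on the centre, so $\varphi_{g,0}(x^p) = x^p$; combined with $\Fix(\varphi_{g,0}) \leq \langle x^p \rangle$ this gives $\Fix(\varphi_{g,0}) = \langle x^p \rangle \cong \mathbb{Z}$. For $\varphi_{g,1} = \iota_g\sigma\tau$ one has $\sigma\tau(x^p) = \sigma(x^{-p}) = y^{-p} = x^{-p}$, so $\varphi_{g,1}(x^p) = x^{-p}$; as $G_{p,p}$ is torsion-free, the only central element fixed under inversion is $1$, giving $\Fix(\varphi_{g,1}) = \{1\}$.

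The one step requiring genuine care is the first: establishing that the fixed-point set on the tree is a single midpoint rather than something larger. Once that is pinned down, the no-inversions property together with Lemma \ref{centre} collapses the setwise stabiliser to the centre, and the rest is bookkeeping of the action on $\langle x^p \rangle$.
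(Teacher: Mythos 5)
Your proposal is correct and follows essentially the same route as the paper: both locate the unique fixed point of $\varphi_{g,r}$ on the tree as the midpoint of an inverted edge, apply Corollary \ref{fixaction} together with the no-inversions property and Lemma \ref{centre} to conclude $\Fix(\varphi_{g,r}) \leq Z(G_{p,p})$, and finish by computing the action on $\langle x^p \rangle$. The only cosmetic differences are that you cite the geometric work inside the proof of Lemma \ref{finiteordergsigmag} rather than re-deriving the swapped vertices $v_{\langle x \rangle}$ and $gv_{\langle y \rangle}$, and your half-edge uniqueness argument is an equivalent rephrasing of the paper's appeal to connectedness of the fixed subtree (Proposition \ref{geometricfixsubtree}).
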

\begin{proof}
By Lemma \ref{compatibilitylemma} and the fact that $g\sigma\tau^r(g) = 1$, the action of $\varphi_{g,r} = \iota_g \sigma \tau^r$ exchanges the vertices $v_{\langle x \rangle}$ and $gv_{\langle y \rangle}$, fixing the midpoint of the geodesic between these vertices. Because one vertex is $v_{\langle x \rangle}$ and the other is a translate of $v_{\langle y \rangle}$, the distance between these vertices is odd. Therefore the midpoint is the midpoint of an edge. This midpoint is the unique point fixed by the action of $\varphi_{g,r}$, since the endpoints of the corresponding edge are swapped, and the points fixed by an isometry of a tree form a connected subtree by Proposition \ref{geometricfixsubtree}.

By Lemma \ref{fixaction} any $g \in \Fix(\varphi_{g,r})$ fixes this midpoint and so fixes the edge, since $G_{p,p}$ acts without inversions. However the edge stabilisers are central, completing the proof of the first claim.

It is now easy to see that $\iota_g \sigma$ fixes the whole centre, and $\iota_g \sigma\tau$ fixes only the identity.
\end{proof}

\bibliographystyle{plain}
\bibliography{references}

\end{document}